\documentclass[preprint,11pt,sort&compress,numbers,draft]{elsarticle}

\usepackage{amsmath,amsthm,amsfonts,amssymb,latexsym,mathrsfs,color,cases,url,enumerate}
\usepackage{tikz}

\newtheorem{thm}{Theorem}[section]
\newtheorem{lem}[thm]{Lemma}

\newtheorem{prop}[thm]{Proposition}

\theoremstyle{definition}
\newtheorem{definition}[thm]{Definition}

\newtheorem{rem}[thm]{Remark}

\numberwithin{equation}{section}

\begin{document}
	
	\begin{frontmatter}
		
		\title{The analytic properties of Hoggatt triangles}
		\author{Jianxi Mao}
		\ead{maojx@dlut.edu.cn}
		\author{Wenle Shi\corref{cor2}}
		\ead{shi-wenle@hotmail.com}
		\cortext[cor2]{Corresponding author.}
		\address{School of Mathematical Sciences, Dalian University of Technology, Dalian 116024, P.R. China}
		\date{}

		\begin{abstract}
			The $d$-Hoggatt triangle is a lower triangular matrix
			whose entries are given by specific minors of Pascal's triangle formed by consecutive $d$ rows and $d$ columns.
			The cases $d=1,2,3$ correspond to Pascal's triangle, the Narayana triangle, and the Baxter triangle, respectively.
			In this paper,
			we present the infinite log-concavity of the row and column sequences, the log-concavity of the sequences along transversals, and the eventual log-convexity of the sequences along rays of the $d$-Hoggatt triangle.
			In addition, we prove the asymptotic normality of the row sequences and total positivity of the $d$-Hoggatt triangle.
		\end{abstract}
		
		\begin{keyword}
			Hoggatt triangle\sep Log-concavity\sep Log-convexity \sep Pascal's triangle
			\MSC[2020] 05A05\sep 05A15\sep 15B05\sep 05A10
		\end{keyword}

	\end{frontmatter}
	
	\section{Introduction}
	
	Baxter numbers enumerate Baxter permutations originating in
	Baxter's study~\cite{Bax64} of fixed points for the composite of commuting functions.
	By inventing a generating tree for Baxter permutations,
	Chung et al.~\cite{CGHK78} presented an explicit expression of the Baxter numbers $B_n$.
	Hoggatt conjectured a simple algorithm to generate the refined Baxter numbers
	$$
	B(n,k)=\frac{\binom{n}{k} \binom{n+1}{k} \binom{n+2}{k}}{\binom{k+1}{k} \binom{k+2}{k}},
	$$
	where $B_{n}=\sum_{k=0}^{n} B(n,k)$.
	Fielder and Alford~\cite{FA89} proved Hoggatt's conjecture
	and defined Hoggatt triangles with index $d$ as a generalization of Pascal's triangle. 
	The Baxter triangle is the case $d=3$.
	It was shown  that the entries of the Hoggatt triangle are special minors of Pascal's triangle,
    see~\cite{LMR21, Cig21}.
	\begin{definition}
		Let $d\ge 1$.
		Define the {\it $d$-Hoggatt triangle} $H_d=\left[H_d(n,k)\right]_{n,k\ge 0}$ as
		$$
		H_d(n,k)=\det\left(\binom{n+i}{k+j}\right)_{i,j=0}^{d-1}.
		$$
	\end{definition}

	Note that when $d=1,2,3$, the $d$-Hoggatt triangles correspond to
	Pascal's triangle, the Narayana triangle and the Baxter triangle $\left[B(n,k)\right]_{n,k\ge 0}$, respectively.
	The Hoggatt triangle and the Hoggatt numbers $H_d(n,k)$ have many fascinating properties~\cite{LMR21,Sta71, Sun08, YUAN}.
	From an analytic perspective, 
	Dilks~\cite{Dil15} proved the row generating functions of Hoggatt triangles have only real zeros.
	Recently, Fang, Zhang and Zhao~\cite{FZZ25} established the interlacing property of the row generating functions of the Baxter triangle, a property stronger than polynomial with only real zeros. 
	They also generalized this result to Hoggatt triangles,
	as a stronger version of Dilks' result.
	On the other hand,
	the analytic properties of Pascal's triangle have been extensively studied in the literature~\cite{LWW23,PWW23}.
    For example, Su and Wang~\cite{SW08} and Yu~\cite{Yu09} presented the log-behavior of 
    the sequences located in a transversal or a ray of Pascal's triangle.
    Zhu~\cite{Zhu19} showed the Stieltjes moment property of the ray of Pascal's triangle.
    However, the log-behavior of sequences in the Narayana triangle and the Baxter triangle 
    has been less studied than that of Pascal's triangle, see~\cite{CYW10,FZZ25}.
    The aim of this paper is to extend several analytic properties of Pascal's triangle to the Narayana triangle and the Baxter triangle.
    We will show that the Hoggatt triangles play an important unifying role in proving these results.

	Let $\alpha=\left\lbrace a_k \right\rbrace _{k=0}^{\infty}$
	be a sequence of nonnegative numbers
	with no internal zeros.
	The sequence is said to be {\it log-concave} (resp. {\it log-convex})
	if $a_k^2 \geq a_{k-1}a_{k+1}$ (resp. $a_k^2 \leq a_{k-1}a_{k+1}$) for all $k\geq 1$.
	The sequence is said to be {\it unimodal} if there exists an index $n$
	such that $a_0\leq a_1\leq \dots \leq a_{n-1} \leq a_{n} \geq a_{n+1} \geq\cdots$ or it is an increasing or decreasing sequence.
	Clearly, log-concavity implies unimodality.
	Define the operator $\mathcal{L}$ on the sequence $\alpha=\left\lbrace a_k \right\rbrace _{k=0}^{\infty}$ by
	$$\mathcal{L}(\alpha)=\left\{a_0^2, a_1^2-a_0a_2,a_2^2-a_1a_3,a_3^2-a_2a_4,... \right\}.$$
	Hence a sequence $\alpha$ is log-concave if and only if $\mathcal{L}(\alpha)$ is nonnegative.
	A sequence $\alpha$ is said to be $k$-log-concave if $\mathcal{L}^j(\alpha)$ is a nonnegative sequence for all $0\leq j \leq k$,
	and {\it infinitely log-concave} if it is $k$-log-concave for all $k \geq 0$.

	Following Karlin \cite{Kar68},
	a finite or infinite matrix is called {\it totally positive} if all its minors are nonnegative.
	The matrix is called totally positive of order $r$
	if its minors of all orders $\leq r$ are nonnegative.
	It is easy to see that a nonnegative sequence $\left\lbrace a_k \right\rbrace _{k=0}^{\infty}$
	with no internal zeros is log-concave
	if and only if its Toeplitz matrix $[a_{i-j}]_{i,j\ge0}$
	is totally positive of order~2.
	An infinite nonnegative sequence $\left\lbrace a_k \right\rbrace _{k=0}^{\infty}$
	is called a {\it P\'olya frequency sequence} (PF sequence for short)
	if its Toeplitz matrix $[a_{i-j}]_{i,j\ge0}$ is totally positive.
	Thus, a  PF sequence is log-concave and hence unimodal.
	We say that a finite sequence $a_0,a_1,\ldots,a_n$ is a PF sequence
	if the corresponding infinite sequence $a_0,a_1,\ldots,a_n,0,0,\ldots$ is a PF sequence.
	Call $\left\lbrace a_k \right\rbrace _{k=0}^{\infty}\in \mathrm{PF}$ if $\left\lbrace a_k \right\rbrace _{k=0}^{\infty}$ is a PF sequence.
	A fundamental characterization for P\'olya frequency sequences states that a sequence
	$\left\lbrace a_k\right\rbrace _{k= 0}^{\infty}$ is a PF sequence
	if and only if its generating function is analytic in a neighborhood of the origin and has the expansion
	\begin{equation}\label{pf-c}
		\sum_{k\ge 0}a_kx^k=cx^me^{\sigma x} \frac{\prod_{j\ge0} (1+\alpha_j x)}{\prod_{j\ge0} (1-\beta_j x)},
	\end{equation}
	where $c>0, m\in\mathbb{N}, \alpha_j,\beta_j,\sigma\ge 0$, and $\sum_{j\ge0} (\alpha_j+\beta_j)<+\infty$
	(see, for example, \cite[p. 412]{Kar68}).
	Aissen, Schoenberg and Whitney \cite{ASW52} showed that a finite nonnegative sequence $\left\lbrace a_k \right\rbrace _{k=0}^{n}\in \mathrm{PF}$
	if and only if the generating function of $\left\lbrace a_k \right\rbrace _{k=0}^{n}$ has only real zeros.

	A sequence $\alpha=\left\lbrace a_k \right\rbrace _{k=0}^{\infty}$ of nonnegative numbers is log-convex
	if and only if its Hankel matrix $[a_{i+j}]_{i,j\ge0}$
	is totally positive of order~2.
	Log-convex sequences are closely related to the Stieltjes moment sequence.
	We say that $\alpha=\left\lbrace a_k\right\rbrace _{k= 0}^{\infty}$ is a {\it Stieltjes moment sequence} (SM sequence for short)
	if it has the form
	$$
	a_k=\int_0^{\infty} x^k d{\mu(x)},
	$$
	where $\mu$ is a nonnegative measure on $[0,+\infty)$.
	The Stieltjes moment problem is one of the classical moment problems and arises naturally in many branches of mathematics~\cite{ST43,Sch17}.
	It is well known~\cite{Pin10,Sch17} that the following are equivalent:
	\begin{itemize}
		\item[(i)] $\alpha$ is a Stieltjes moment sequence.
		\item[(ii)] The Hankel matrix $[a_{i+j}]_{i,j\ge 0}$ is totally positive.
		\item[(iii)] For any $n\ge 0$, both $[a_{i+j}]_{0\le i,j\le n}$ and $[a_{i+j+1}]_{0\le i,j\le n}$ are positive semidefinite.
	\end{itemize}

	
	
	The organization of this paper is as follows.
	In Section 2, we show that for each $d$,
	the row and column sequences in the $d$-Hoggatt triangle are infinitely log-concave.
	In Section 3, we show that the sequences located in a transversal of the $d$-Hoggatt triangle are log-concave
	and the sequences located in a ray are asymptotically log-convex.
	In addition, we provide a criterion for determining when the sequences located in a ray form an SM sequence.
	In Sections 4 and 5, we prove the asymptotic normality and total positivity in Hoggatt triangles.
	In the last section, we consider the log-convexity of the row sums 
	and present a couple of problems.

	\section{Infinite log-concavity of row and column sequences}
	
	Let $\left\langle n \right\rangle _d=\binom{n+d-1}{d}$
	and $\left\langle n \right\rangle _d!=\prod_{j=1}^{n}\left\langle j \right\rangle _d$.
	Cigler~\cite{Cig21} presented the explicit expression of the $d$-Hoggatt number,
	\begin{equation}\label{defentry}
		H_d(n,k)=\frac{ \left\langle n \right\rangle _d!}{\left\langle k \right\rangle _d!~\left\langle n-k \right\rangle _d!}=\prod_{j=0}^{d-1}\frac{\binom{n+j}{k}}{\binom{k+j}{k}}, \quad 0\leq k \leq n
	\end{equation}
	and $H_d(n,k)=0$ for $k>n$.
	Using the theory of the multiplier sequence, Dilks derived the following result.
	\begin{lem}[{\cite[Theorem 3.8]{Dil15}}]\label{Dilks}
		The row generating function of the $d$-Hoggatt triangle
		\begin{equation*}
			H_{n,d}(x)=\sum_{k=0}^nH_d(n,k)x^k
		\end{equation*}
		has only real zeros for each $d\geq1$ and $n\geq 0$.
	\end{lem}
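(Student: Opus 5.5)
The plan is to realize the row polynomial $H_{n,d}(x)$ as the image of $(1+x)^n$ under a composition of multiplier sequences, and then apply the P\'olya--Schur theorem. By \eqref{defentry},
$$
H_d(n,k)=\binom{n}{k}\prod_{j=1}^{d-1}\frac{\binom{n+j}{k}}{\binom{k+j}{k}},
$$
and for each $j\ge1$ and fixed $n$,
$$
\frac{\binom{n+j}{k}}{\binom{k+j}{k}}=\frac{(n+j)!\,j!}{(n+j-k)!\,(k+j)!}.
$$
So, up to a positive factor independent of $k$, the coefficient of $x^k$ in $H_{n,d}(x)$ is
$$
\binom{n}{k}\prod_{j=1}^{d-1}\frac{1}{(n+j-k)!\,(k+j)!}.
$$
I would therefore show that, starting from a polynomial with only real zeros, multiplying the $k$-th coefficient successively by $1/(k+j)!$ and by $1/(n+j-k)!$ preserves real-rootedness.

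\textbf{Step 1.} Show that $\{1/(k+j)!\}_{k\ge0}$ is a multiplier sequence for each fixed $j\ge0$. By the P\'olya--Schur theorem it suffices that $\sum_k x^k/\big(k!\,(k+j)!\big)$ is, up to sign of $x$, a Laguerre--P\'olya entire function with nonnegative coefficients. This series is a Bessel-type function $x^{-j/2}I_j(2\sqrt{x})$, whose zeros are all real and negative. So $\{1/(k+j)!\}$ is a (positive) multiplier sequence.

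\textbf{Step 2.} Handle the factor $1/(n+j-k)!$ by reversal. If $p(x)=\sum_{k=0}^n a_kx^k$ is real-rooted with nonnegative coefficients, then so is its reversal $x^np(1/x)=\sum_k a_{n-k}x^k$. Applying Step 1's multiplier with shift $n-k+j$ in the reversed variable multiplies $a_{n-k}$ by $1/(n-k+j)!$. Reversing back gives $\sum_k a_k x^k/(n+j-k)!$, which is again real-rooted.

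\textbf{Step 3.} Start with $(1+x)^n$, which has coefficients $\binom nk$ and is real-rooted. For $j=1,\dots,d-1$, apply in turn the multiplier $1/(k+j)!$ and the reversed multiplier $1/(n+j-k)!$. Positivity and degree $n$ are preserved at every stage, so each intermediate polynomial is real-rooted with nonnegative coefficients. The final polynomial is a positive constant times $H_{n,d}(x)$, which proves Lemma~\ref{Dilks}.

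The main obstacle is Step 1: verifying the real negative zeros of $\sum_k x^k/(k!(k+j)!)$. I would cite the classical fact that $I_j$ (equivalently $J_j$) has only real zeros for $j>-1$. Alternatively, I would invoke Laguerre's theorem that $\{1/\Gamma(k+j+1)\}$ is a multiplier sequence, obtained as a limit of $\{1/(k+1)_m\}$-type sequences arising from real-rooted polynomials of the form $\binom{m}{k}$-weighted Jensen polynomials. One must also check that the multiplier preserves the degree and handles boundary coefficients, which holds because all the multipliers involved are strictly positive.
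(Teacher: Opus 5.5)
Your proof is correct and is the same multiplier-sequence argument the paper attributes to Dilks; the paper gives no proof of its own and only cites \cite[Theorem 3.8]{Dil15}. Step 1 also follows at once from Laguerre's theorem, quoted in the proof of Theorem~\ref{tp}, that $\{1/(t)_k\}_{k\ge0}$ is a multiplier sequence for $t>0$, since $1/(k+j)!=\frac{1}{j!}\cdot\frac{1}{(j+1)_k}$. The reversal in Step 2 is valid but unnecessary, because $1/(n+j-k)!=k!\binom{n+j}{k}/(n+j)!$ and $\{k!\binom{n+j}{k}\}_{k\ge0}$ is itself a multiplier sequence by P\'olya--Schur, with associated function $(1+x)^{n+j}$ of the form \eqref{ms-tran}.
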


	Now we turn to the infinite log-concavity of the column sequences of the $d$-Hoggatt triangle.
	Let $\{a_k\}_{k=0}^{\infty}$ be a PF sequence interpolated by the polynomial $p(x)$,
	i.e., $a_k=p(k)$ for $k\ge 0$.
	Then the generating function of $a_k$ is of the form
	$$\sum_{k=0}^{\infty} a_k x^k=\frac{w(x)}{(1-x)^{d+1}},$$
	where $w(x)$ is a polynomial of degree at most $d$,
	see~\cite[Chapter 4.3]{Sta97}.
	Since $\{a_k\}_{k=0}^{\infty}$ is a PF sequence,
	by~\eqref{pf-c}, $w(x)$ has nonnegative coefficients and only real zeros.
   	Br\"and\'en~\cite{Bra11} and Br\"and\'en and Chasse~\cite{BC15} presented the infinite log-concavity of a PF sequence separately.

	\begin{thm}\label{pbmc}
		Let $\left\{a_k\right\}_{k=0}^{\infty}$ be a P\'olya frequency sequence.
		\begin{enumerate}
			\item [\rm(i)] If the polynomial $\sum_{k=0}^{n} a_k x^k$ with nonnegative coefficients has only real zeros,
			then the sequence $a_0, a_1,\dots,a_n$ is infinitely log-concave.
			\item [\rm(ii)] If there exist $p(x)\in \mathbb{R}[x]$ and $p(-2)=p(-1)=0$ such that
			$a_k=p(k)$ for each $k=0,1,2,\ldots$, 
			then $\left\{a_k\right\}_{k=0}^{\infty}$ is infinitely log-concave.
		\end{enumerate}
	\end{thm}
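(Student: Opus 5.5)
This theorem combines two known results, Br\"and\'en~\cite{Bra11} for (i) and Br\"and\'en and Chasse~\cite{BC15} for (ii). I would organize the proof around a single mechanism: show that the operator $\mathcal{L}$ preserves a class of sequences whose membership implies nonnegativity, and then iterate.

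For (i), the key statement is Br\"and\'en's theorem (the Fisk/McNamara--Sagan/Stanley conjecture): if $\sum_{k=0}^n a_kx^k$ has only real zeros and nonnegative coefficients, then the polynomial $\sum_k (a_k^2-a_{k-1}a_{k+1})x^k$ also has only real zeros.

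1. \emph{Setup.} Write $\sum_k a_k x^k = a_n\prod_j (x+r_j)$ with $r_j\ge 0$. Equivalently, $a_k = a_n e_{n-k}(r)$, so the $a_k/\binom nk$ form a sequence of averaged elementary symmetric functions.
2. \emph{Symmetric-function form.} Express $\mathcal{L}$ applied to this sequence in terms of the $r_j$.
3. \emph{Real-rootedness.} Prove that the resulting generating polynomial is real-rooted. Br\"and\'en does this by viewing it as the image of a stable multivariate polynomial under a linear operator, using the Borcea--Br\"and\'en characterization of stability preservers, and then specializing the variables.
4. \emph{Nonnegativity.} Real-rootedness with nonnegative coefficients gives the PF property by Aissen--Schoenberg--Whitney. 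Hence the new sequence is nonnegative and satisfies the same hypothesis, so induction on $j$ makes $\mathcal{L}^j(\alpha)$ nonnegative for all $j$.

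Step 3 is the main obstacle, because the stability-preserver verification requires computing the symbol of the operator and checking that it is stable. Here I would simply cite \cite{Bra11}.

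For (ii), I would work with the polynomial interpolant $p$. By the discussion before the theorem, $\sum_k p(k)x^k = w(x)/(1-x)^{d+1}$ with $w$ real-rooted and nonnegative. This is equivalent to $p$ having all its zeros real and in $[-1,0]$ in a suitable sense (Wagner/Brenti). The plan is to show that the class
\[
\mathcal{C}=\{p:\ p \text{ has only real zeros in } [-1,0]\text{-type position},\ p(-1)=p(-2)=0\text{-compatible}\}
\]
is preserved by $p(x)\mapsto p(x)^2-p(x-1)p(x+1)$. The condition $p(-1)=p(-2)=0$ is exactly what makes the new first term $a_0^2$ agree with $q(0)$ for $q(x)=p(x)^2-p(x-1)p(x+1)$, because $p(-1)=0$. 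So $\mathcal{L}(\alpha)$ is again interpolated by a polynomial, and one checks that $q(-1)=q(-2)=0$ also holds: $q(-1)=p(-1)^2-p(-2)p(0)=0$, and $q(-2)$ vanishes given $p(-2)=p(-1)=0$ together with $p(-3)p(-1)=0$.

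The obstacle in (ii) is then to show that $q$ again yields a PF sequence. Br\"and\'en and Chasse do this by showing that $q$'s zeros remain in the required region via a stability argument on $p(x+\tfrac{1}{2}\pm\tfrac12)$. I would cite \cite{BC15} for this step and conclude by induction as in (i).
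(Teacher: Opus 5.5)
Your treatment of (i), and the overall shape of (ii) (cite the hard preservation step, then induct), agree with the paper, which gives no argument of its own and simply quotes \cite{Bra11} and \cite{BC15}. The gap is in the invariant you propose for (ii).

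First, it is not true that $\{p(k)\}_{k\ge0}$ being PF is equivalent to $p$ having its zeros in $[-1,0]$. Brenti's theorem gives only the implication ``zeros of $p$ in $[-1,0]$ $\Rightarrow$ $w$ real-rooted.'' Second, that zero condition is incompatible with the hypothesis $p(-2)=0$. So your class $\mathcal{C}$ contains no nonzero polynomial, and the step ``$q$'s zeros remain in the required region'' has no content. The examples the theorem is meant for violate your condition: $p(x)=\binom{x+k}{k}$ has zeros $-1,\dots,-k$, yet $\sum_{n}\binom{n+k}{k}x^n=(1-x)^{-k-1}$ is PF. Likewise, the Hoggatt column interpolant in the proof of Theorem~\ref{lcthm} has zeros down to $-(k+d-1)$.

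The invariant has to be the PF property itself together with the vanishing conditions:
\[
\mathcal{C}=\bigl\{p\in\mathbb{R}[x]:\ \{p(k)\}_{k\ge0}\ \text{is PF and}\ p(-1)=p(-2)=0\bigr\}.
\]
Here PF is read off from the numerator $w$ having nonnegative coefficients and only real zeros. What you need from \cite{BC15} is that $q(x)=p(x)^2-p(x-1)p(x+1)$ stays in $\mathcal{C}$. Your own bookkeeping then finishes the argument: $q(0)=a_0^2$ because $p(-1)=0$, and $q(-1)=q(-2)=0$. Hence $\mathcal{L}(\alpha)=\{q(k)\}_{k\ge0}$, which closes the induction.

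The vanishing conditions really must be part of the invariant, not just ``compatible'' with it. Take $p(x)=x^2$: it gives the PF sequence $\{k^2\}$, with generating function $x(1+x)/(1-x)^3$. But $q(x)=2x^2-1$ has $q(0)<0$, so PF-ness of the value sequence alone is not preserved by $p\mapsto q$.
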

	The column generating function of Pascal's triangle is
	$$\sum_{n=0}^{\infty} \binom{n+k}{k} x^n=\frac{1}{(1-x)^{k+1}}.$$
	This implies that $\left\{\binom{n+k}{k}\right\}_{n=0}^{\infty}$
	is a PF sequence for each $k$.
	The interpolated polynomial
	$$p(x)=\binom{x+k}{k}:=\frac{(x+k)(x+k-1)\dots(x+1)}{k!}$$
	with $p(-2)=p(-1)=0$ for $k\ge 2$.
	As a corollary,
	Br\"and\'en and Chasse~\cite{BC15} proved that for each $k\ge 0$ and $i\in \mathbb{N}$, $\mathcal{L}^i\left(\left\{\binom{n+k}{k} \right\}_{n=0}^{\infty}\right)$ is a PF sequence
	and therefore the column sequence of Pascal's triangle is infinitely log-concave (the cases $k=0$ and $k=1$ trivially hold).
    Now we show the infinite log-concavity in the $d$-Hoggatt triangle.
	
	\begin{thm}\label{lcthm}
		The row sequence $\left\{H_d(n,k)\right\}_{k=0}^{n}$ and the column sequence of the $d$-Hoggatt triangle
		$\left\{H_d(n+k,k)\right\}_{n=0}^{\infty}$ are both infinitely log-concave for each $d\geq 1$ and $n, k\geq 0$.
	\end{thm}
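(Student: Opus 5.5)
The row statement should follow directly from the results quoted above. For fixed $n$ and $d$, the coefficients $H_d(n,k)$ are nonnegative by \eqref{defentry}. By Lemma~\ref{Dilks}, $H_{n,d}(x)$ has only real zeros, so by the Aissen--Schoenberg--Whitney theorem the finite sequence $\{H_d(n,k)\}_{k=0}^n$ is a PF sequence. Theorem~\ref{pbmc}(i) then gives infinite log-concavity.

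For the columns I would apply Theorem~\ref{pbmc}(ii) to $a_n=H_d(n+k,k)$. By \eqref{defentry},
$$a_n=p(n):=\prod_{j=0}^{d-1}\frac{\binom{n+k+j}{k}}{\binom{k+j}{k}},$$
a polynomial in $n$ of degree $dk$. The $j=0$ factor $\binom{n+k}{k}$ vanishes at $n=-1,\dots,-k$, so $p(-1)=p(-2)=0$ whenever $k\ge 2$. The small cases I would check by hand. For $k=0$ the sequence is constantly $1$. For $k=1$ we get $a_n=\prod_{j=0}^{d-1}\frac{n+1+j}{1+j}=\binom{n+d}{d}$, which is a column of Pascal's triangle, already known to be infinitely log-concave by the Br\"and\'en--Chasse result recalled above.

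It remains to show $\{a_n\}$ is a PF sequence for $k\ge2$. I plan to do this by showing that $\sum_n a_nx^n=w(x)/(1-x)^{dk+1}$ with $w$ having nonnegative coefficients and only real zeros; this suffices by \eqref{pf-c}.

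\begin{enumerate}
\item I would treat each factor $q_j(n)=\binom{n+k+j}{k}$ separately. By Vandermonde, $\binom{n+k+j}{k}=\sum_i\binom{j}{i}\binom{n+k}{k-i}$.
\item Summing over $n$, I expect the numerator of $\sum_n q_j(n)x^n$, over $(1-x)^{k+1}$, to be $\sum_i\binom{j}{i}\binom{k}{i}x^i$. This is the Hadamard product of $(1+x)^j$ and $(1+x)^k$, hence real-rooted by the Schur--Maló theorem, and its coefficients are nonnegative. The exact index bookkeeping in this step needs checking.
\item Finally I would invoke Wagner's theorem on the ``diamond product''. It says that if polynomials $p,q$ interpolate sequences whose generating-function numerators (the $W$-polynomials) have nonnegative coefficients and only real zeros, then the pointwise product $pq$ has the same property. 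Applying it iteratively to $q_0q_1\cdots q_{d-1}$ gives the PF property of $\{a_n\}$, and Theorem~\ref{pbmc}(ii) finishes the column statement.
\end{enumerate}

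The main obstacle is the PF property of the columns, specifically step~3. If Wagner's theorem is not available in a usable form, I would try Brenti's criterion instead, which applies after a shift of variable. I would also try a direct proof that the Hadamard-type product preserves real-rootedness of the $W$-polynomials in this special family.
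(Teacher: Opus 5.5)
Your treatment of the rows, of the columns with $k=0,1$, and the final appeal to Theorem~\ref{pbmc}(ii) for $k\ge 2$ match the paper. The PF property of the columns for $k\ge 2$ is the crux, as you say, and it does not go through; the problem is more than index bookkeeping. The formula in step~2 is false. The expression $\sum_i\binom{j}{i}\binom{k}{i}x^i/(1-x)^{j+k+1}$ is the generating function of $\binom{n+j}{j}\binom{n+k}{k}$, not of $\binom{n+k+j}{k}$. For the latter,
\[
\sum_{n\ge0}\binom{n+k+j}{k}x^n=\frac{x^{-j}\Bigl(1-(1-x)^{k+1}\sum_{N=0}^{j-1}\binom{N+k}{k}x^N\Bigr)}{(1-x)^{k+1}},
\]
and for $j\ge 1$ this numerator has negative coefficients. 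For example, it is $2-x$ for $k=j=1$ and $3-3x+x^2$ for $k=2$, $j=1$. So the factors $q_j$ with $j\ge 1$ are not PF; the Toeplitz matrix of $2,3,4,5,\dots$ already has a $3\times 3$ minor equal to $-1$. Wagner's theorem requires PF inputs, so it has nothing to act on.

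Regrouping the linear factors does not rescue the argument either. For $d=k=2$ the interpolating polynomial is, up to a constant, $(n+1)(n+2)^2(n+3)$. Since $-1$ is a simple zero, every factorization into nonconstant real polynomials contains a factor $f$ of some degree $m$ whose zeros all lie in $\{-2,-3\}$. Write $f(n)=\sum_{i=0}^m h_i\binom{n+m-i}{m}$ and evaluate at $n=-1$: this gives $h_m=(-1)^m f(-1)$, which is negative for odd $m$. For $m=2$ the numerators are $4-3x+x^2$ and $6-6x+2x^2$, both with a negative coefficient. Hence no factorization into PF-interpolating polynomials exists, even though this column is PF with numerator $1+x$.

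Your fallback fails for a similar reason. Brenti's criterion needs all zeros of the interpolating polynomial in $[-1,0]$. Here the zeros $-1,\dots,-(k+d-1)$ spread over an interval of length $k+d-2\ge 2$ when $k,d\ge 2$, and no shift of the variable changes that.

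What is missing is a real-rootedness result for the column numerator as a whole. The paper gets it from Cigler's formula $\sum_{n}H_d(n+k,k)x^n=\sum_j N(d,k,j)x^j/(1-x)^{dk+1}$, whose numerator uses Sulanke's $d$-dimensional Narayana numbers. It then applies the theorem of Chen, Yang and Zhang that this polynomial has only real nonpositive zeros.
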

	\begin{proof}
		Since the row generating function of $d$-Hoggatt triangle $H_{n,d}(x)$ has only real zeros,
        then the infinite log-concavity of the row sequence follows from Theorem~\ref{pbmc} (i).
        
		By~\eqref{defentry},
		$H_d(n,0)=1$ and
		$H_d(n+1,1)=\binom{n+d}{d}.$
		The case $k=0$ trivially holds
		and the case $k=1$ reduces to Br\"and\'en and Chasse's result for Pascal's triangle.
		For $k\ge 2$,
		we now show that 
		$\left\{H_d(n+k,k)\right\}_{n=0}^{\infty}$ is a PF sequence.
		Cigler~\cite[Theorem 4]{Cig21} showed that the column generating function of the $d$-Hoggatt triangle has the form
		$$\sum\limits_{n=0}^{\infty}H_d(n+k,k)x^n
		=\frac{\displaystyle\sum\limits_{j=0}^{(d-1)(k-1)}N(d,k,j)x^j}{(1-x)^{dk+1}},$$
		where $N(d,k,j)$
		are the Narayana numbers of dimension $d$ introduced by Sulanke~\cite{Sul04}.
		Chen, Yang and Zhang~\cite[Theorem 3.1]{CYZ18} showed that
		$$
		\displaystyle\sum\limits_{j=0}^{(d-1)(k-1)}N(d,k,j)x^j
		$$
		has only real nonpositive zeros,
		which implies that $\left\lbrace H_d(n+k,k) \right\rbrace_{n=0}^{\infty}$ is a PF sequence.
		
     	It is easy to see that $\left\{H_d(n+k,k)\right\}_{n=0}^{\infty}$
		is interpolated by the polynomial
		$$
		p(x)=\prod_{j=0}^{d-1}\frac{\binom{x+k+j}{k}}{\binom{k+j}{k}}
		=\prod_{j=0}^{d-1}\frac{(x+k+j)(x+k+j-1)\dots(x+j+1)}{(k+j)(k+j-1)\dots(j+1)}.
		$$
		Clearly, $p(-2)=p(-1)=0$.
		By Theorem~\ref{pbmc} (ii),
		$\left\{H_d(n+k,k)\right\}_{n=0}^{\infty}$ is infinitely log-concave.		
	\end{proof}

	\section{Log-behavior of transversal and ray sequences of Hoggatt triangles}

	Su and Wang~\cite{SW08} demonstrated that
	the sequence located in a transversal of Pascal's triangle is log-concave,
	and the sequence  located in a ray of Pascal's triangle, not parallel to the column or the diagonal, is {\it asymptotically log-convex},
	i.e. there exists a nonnegative number $m$ such that $a_m,a_{m+1}, a_{m+2},... $ is log-convex.
	Yu~\cite{Yu09} strengthened these results, 
	confirming the conjectures of Su and Wang~\cite{SW08}.
	Specifically,
	Yu established that the transversal sequences of Pascal's triangle are all PF sequences (see Theorem~\ref{Conj-Yu});
	and for the ray sequences of Pascal's triangle, 
	Yu provided an explicit expression for the index 
	$m$, ensuring that the sequence $\{a_i\}_{i=m}^{\infty}$ is log-convex.
	Zhu~\cite[Proposition 3.7]{Zhu19} further extended Yu's work 
	and showed that the sequence $\{a_i\}_{i=m}^{\infty}$ is not merely log-convex but also an SM sequence.

	\begin{lem}[{\cite[Theorem 1.5]{Zhu19}}]\label{pos-sem}
		Let $n_0 \geq k_0 \geq 0$ and $\{ p,q,d_j,e_j \} \subseteq \mathbb{R}^+$
		for $j \in \mathbb{N}$.
		Define the sequence
		$$b_i = \frac{\Gamma{(n_0+ip+1)}}{\Gamma{(k_0 +iq+1)}\Gamma{((n_0-k_0)+i(p-q)+1)}}
		\prod_{j=0}^{\ell}\frac{1}{id_j+e_j},~~~
		i=0,1,2,...,$$
		where $\Gamma(x)$ is the Gamma function.
		If $p>q$ and $(n_0+1)q/p-1 \leq k_0 \leq (n_0+1)q/p$,
		then $\{ b_i \}_{i=0}^{\infty}$ forms a Stieltjes moment sequence.
	\end{lem}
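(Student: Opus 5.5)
The plan is to separate the product into two kinds of factors and use the fact that Stieltjes moment sequences are closed under termwise multiplication. If $a_i=\int x^i\,d\mu$ and $a'_i=\int y^i\,d\nu$ with $\mu,\nu$ supported on $[0,\infty)$, then $a_ia'_i=\int (xy)^i\,d(\mu\otimes\nu)$. Each factor $1/(id_j+e_j)$ is harmless:
$$\frac{1}{id_j+e_j}=\int_0^\infty e^{-t(id_j+e_j)}\,dt=\int_{(0,1]} x^i\,d\mu_j(x),$$
where $\mu_j$ is the push-forward of $e^{-te_j}dt$ under $x=e^{-td_j}$, a nonnegative measure. So everything reduces to showing that
$$c_i=\frac{\Gamma(a+ip)}{\Gamma(b+iq)\,\Gamma(c+ir)},\qquad a=n_0+1,\ b=k_0+1,\ c=n_0-k_0+1,\ r=p-q>0,$$
is a Stieltjes moment sequence. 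Note that $b+c=a+1$. The hypothesis $(n_0+1)q/p-1\le k_0\le (n_0+1)q/p$ becomes $b/q\ge a/p$ and $c/r\ge a/p$ once we use $b+c=a+1$.

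To obtain a representing measure, I would show that $s\mapsto c_s$ for real $s\ge0$ is the moment generating function $\int e^{su}\,\nu(du)$ of an infinitely divisible law on $\mathbb{R}$. Pushing $\nu$ forward by $x=e^u$ then gives $c_i=\int_0^\infty x^i\,d\mu(x)$. The tool is Malmst\'en's (Frullani-type) formula
$$\log\Gamma(z)=\int_0^\infty\Big[(z-1)e^{-t}-\frac{e^{-t}-e^{-zt}}{1-e^{-t}}\Big]\frac{dt}{t}.$$
Apply it to the three Gamma factors and use $p=q+r$ and $a-1=(b-1)+(c-1)-1$ to collect the terms linear in $s$. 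This leaves $\log c_s=\beta s+\gamma+I(s)$ with
$$I(s)=\int_0^\infty\Big[\frac{e^{-at}(e^{-pst}-1)}{1-e^{-t}}-\frac{e^{-bt}(e^{-qst}-1)}{1-e^{-t}}-\frac{e^{-ct}(e^{-rst}-1)}{1-e^{-t}}\Big]\frac{dt}{t}.$$
In the three pieces I would substitute $u=pt$, $u=qt$ and $u=rt$ respectively, so that each becomes $\int_0^\infty (e^{-su}-1)\,\phi(u)\,du/u$. The result is
$$I(s)=\int_0^\infty (e^{-su}-1)\,\rho(u)\,\frac{du}{u},\qquad \rho(u)=g_{a/p,\,p}(u)-g_{b/q,\,q}(u)-g_{c/r,\,r}(u),\quad g_{\alpha,\lambda}(u)=\frac{e^{-\alpha u}}{1-e^{-u/\lambda}}.$$
The constant-in-$s$ compensators from the three substitutions must be checked to cancel; the small-$u$ singularities cancel because $\frac1p\cdot p-\frac1q\cdot q-\frac1r\cdot r$ balances against the linear term. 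If $\rho\ge0$, then $-\rho(u)\,du/u$ reflected to the negative half-line is a L\'evy measure with finite first moment near $0$. Hence $e^{\beta s+\gamma+I(s)}$ is the moment generating function of (a positive multiple of) a compound-Poisson-type infinitely divisible law shifted by $\beta$, and this measure is the required $\nu$.

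The main obstacle is proving $\rho(u)\ge0$ for all $u>0$, and this is exactly where the constraints $b/q\ge a/p$ and $c/r\ge a/p$ enter. I would first use monotonicity in $\alpha$: $g_{\alpha,\lambda}$ decreases in $\alpha$, so it suffices to treat the extreme case $b/q=c/r=a/p$. That extreme is compatible with $b+c=a+1$ only when $a/p=1/p$, so instead I would keep the constraint $b+c=a+1$ and reduce to an inequality of the form
$$\frac{p}{1-e^{-u/p}}\,e^{-\alpha u}\ \ge\ \frac{q\,e^{-\alpha' u}}{1-e^{-u/q}}+\frac{r\,e^{-\alpha'' u}}{1-e^{-u/r}}$$
after rescaling. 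I would then attempt it by expanding $1/(1-e^{-x})=\sum_{m\ge0}e^{-mx}$ and comparing exponents termwise, or by a convexity argument on $x\mapsto x/(1-e^{-x})$. If the direct comparison fails, I would fall back to rational $p,q$: Gauss's multiplication formula turns $c_s$ into a ratio of products of $\Gamma(\cdot+s)$, whose Mellin inverse is a Meijer $G$-function. Nonnegativity then follows from known positivity criteria for balanced $G$-functions, with the interlacing of parameters supplied by the same two inequalities. The general case would follow by a limiting argument, since Stieltjes moment sequences are closed under pointwise limits.
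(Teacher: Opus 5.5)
The paper gives no proof of this lemma; it is quoted from Zhu's Theorem~1.5, so your argument has to stand on its own. Most of it is sound. The factors $1/(id_j+e_j)$ are handled correctly. The hypothesis does translate into $b/q\ge a/p$ and $c/r\ge a/p$ with $b+c=a+1$. Malmst\'en's formula does yield $\log c_s=\log c_0+\beta s-\int_0^\infty(1-e^{-su})\rho(u)\,du/u$. Two small corrections: the compensator produced by the three substitutions is not a constant but the linear term $\beta s$, with $\beta=p\log p-q\log q-r\log r$. Also, since $\rho(0^+)=1/2$, the L\'evy measure $\rho(u)\,du/u$ has infinite mass, so the law is not compound Poisson. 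That does no harm: $\int_0^1\rho(u)\,du<\infty$ is all you need for a Bernstein function.

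The genuine gap is the step you flag yourself: you never prove $\rho(u)\ge0$, and this is the only place the hypotheses enter. None of your three routes closes it:
\begin{enumerate}
\item The monotonicity reduction fails because $b$ and $c$ are coupled, as you note.
\item The displayed ``reduced'' inequality carries prefactors $p,q,r$ that do not come from the substitutions, since $dt/t$ is scale invariant. So it is not equivalent to $\rho\ge0$: near $u=0$ it compares $p^2/u$ with $(q^2+r^2)/u$, whereas in $\rho$ the $1/u$ terms cancel exactly.
\item The Meijer-$G$ fallback only relocates the problem. Checking the required interlacing of the parameters produced by Gauss's multiplication formula is the same combinatorial question, and you leave it unverified.
\end{enumerate}

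The missing step has a short proof along the lines of your ``termwise'' idea, carried out on $\rho$ itself. Expanding the geometric series gives $\rho(u)=\int e^{-ut}\,d(\mu_+-\mu_-)(t)$, where $\mu_+=\sum_{m\ge0}\delta_{(a+m)/p}$ and $\mu_-=\sum_{m\ge0}\bigl(\delta_{(b+m)/q}+\delta_{(c+m)/r}\bigr)$. Hence $\rho(u)=u\int_0^\infty e^{-ut}\bigl(N_+(t)-N_-(t)\bigr)\,dt$ with $N_\pm(t)=\mu_\pm([0,t])$, and it suffices to show $N_+\ge N_-$.

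Put $T=pt$. Then $N_+=\lfloor T-a\rfloor+1$ for $T\ge a$ and $0$ otherwise. $N_-$ is the sum of $\lfloor qT/p-b\rfloor+1$ (present only if $qT/p\ge b$) and $\lfloor rT/p-c\rfloor+1$ (present only if $rT/p\ge c$).
\begin{enumerate}
\item If both terms are present, $\lfloor x\rfloor+\lfloor y\rfloor\le\lfloor x+y\rfloor$ and $b+c=a+1$ give $N_-\le\lfloor T-a-1\rfloor+2=N_+$.
\item If only the first is present, then $T\ge pb/q\ge a$. Moreover $qT/p-b\le T-a$, because $(1-q/p)T\ge(1-q/p)a\ge a-b$, using $b\ge aq/p$.
\item If only the second is present, the argument is symmetric, using $c\ge ar/p$.
\end{enumerate}
Hence $\rho\ge0$ for all real $p>q>0$, and no rational approximation is needed. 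With this inserted, your argument produces a representing measure supported on $(0,p^p/(q^qr^r)]$.
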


	We generalize these results to the Hoggatt triangles.
	
	\begin{thm}\label{ray}
		Let $n_0, k_0, p, q$ be nonnegative integers with $n_0\geq k_0$
		and $d\geq 1$.
		Define 
		$$C_i= H_d(n_0+ip,k_0+iq),\quad i=0,1,2,\ldots.$$
		Then the following statements hold.
		\begin{enumerate}
			\item [\rm(i)] If $p<q$, then $\left\{C_i \right\}_{i=0}^{\infty}$ is log-concave and therefore unimodal;
			\item [\rm(ii)] If $p>q>0$,  then $\left\{C_i \right\}_{i=0}^{\infty}$ is asymptotically log-convex;
			\item [\rm(iii)]
			If $p>q>0$, $(d-1)q\le p$ and
			$$\frac{(n_0+d)q}{p}-1 \leq k_0 \leq \frac{(n_0+1)q}{p},$$
			then $\{ C_i \}_{i=0}^{\infty}$ forms a Stieltjes moment sequence.
		\end{enumerate}
	\end{thm}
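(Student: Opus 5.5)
The plan rests on the product formula \eqref{defentry}:
$$
H_d(n,k)=\prod_{j=0}^{d-1}\frac{\binom{n+j}{k}}{\binom{k+j}{k}}
=\prod_{j=0}^{d-1}\binom{n+j}{k}\cdot\prod_{j=0}^{d-1}\frac{j!\,k!}{(k+j)!}.
$$
This writes $C_i$ as a product of $d$ Pascal-ray sequences $\binom{n_0+j+ip}{k_0+iq}$, $j=0,\dots,d-1$, times a correction factor $\prod_{j=0}^{d-1}\frac{j!\,(k_0+iq)!}{(k_0+iq+j)!}$. Each statement then reduces to the corresponding Pascal statement together with closure properties of log-concavity, log-convexity and SM sequences under products.

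For (i), with $p<q$, each factor $\binom{n_0+j+ip}{k_0+iq}$ is a transversal sequence of Pascal's triangle, so it is log-concave by Su--Wang/Yu (Theorem~\ref{Conj-Yu}). These sequences have no internal zeros: they are positive up to the index where $k_0+iq>n_0+j+ip$ and zero afterwards. The correction factor $\prod_j \frac{1}{(k+1)(k+2)\cdots(k+j)}$ with $k=k_0+iq$ is a product of reciprocals of sequences $k+m$ that are linear in $i$. The reciprocal of a positive linear (hence log-concave, and in fact $1/(ai+b)$ is log-convex) sequence is log-convex, which points the wrong way. So I would instead regroup: $\binom{n+j}{k}/\binom{k+j}{k}=\frac{(n+j)!\,j!}{(k+j)!\,(n-k+j)!}$. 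For each $j$, the sequence $i\mapsto \frac{(n_0+j+ip)!}{(k_0+j+iq)!\,(n_0-k_0+j+i(p-q))!}$ is a transversal/ray of Pascal's triangle shifted by $j$ in the top row, i.e.\ $\binom{N}{K}$ with $N=n_0+ip+j$, $K=k_0+iq+j$, up to the factor $\frac{(N-K+j)!\cdots}{}$. I would handle this cleanly by writing
$$
C_i=\prod_{j=0}^{d-1} j!\cdot\frac{(n+j)!}{(k+j)!\,(n-k+j)!}
$$
and proving log-concavity of $i\mapsto \frac{\Gamma(n_0+j+ip+1)}{\Gamma(k_0+j+iq+1)\Gamma(n_0-k_0+j+i(p-q)+1)}$ directly. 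Via $\log\Gamma$, the second difference in $i$ is $p^2\psi'(\cdot)-q^2\psi'(\cdot)-(p-q)^2\psi'(\cdot)$ in a discrete form. I would use Yu's argument for Pascal transversals, which works for real shifts, or check the discrete ratio inequality term by term; products of log-concave sequences without internal zeros are log-concave, and unimodality follows.

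For (ii), with $p>q>0$, I would use Stirling's formula on the Gamma expression. The second difference of $\log C_i$ behaves like $\frac{1}{2i^2}\bigl(-d\cdot\frac{?}{}\bigr)$ plus $d$ terms of the form $\frac{1}{2}\bigl(-\frac1i+\frac1i+\frac1i\bigr)$. Concretely, $\log\Gamma(ai+b)$ has second difference $\frac{1}{i}+O(i^{-2})$ in leading order independent of $a$, so each $j$-factor contributes $(1-1-1)\cdot(-\frac{1}{2i^2})$-type terms. I expect the expansion to yield a positive leading term $\frac{d}{2i^2}$ up to constants, giving eventual log-convexity. Alternatively, and more safely, (iii) already gives SM (hence log-convex) sequences, and a shift argument reduces (ii) to it: for large $m$, replace $n_0,k_0$ by $n_0+mp,k_0+mq$. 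However, the window condition in (iii) need not hold after shifting, so Stirling remains the primary route.

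For (iii), I would write $C_i=\prod_{j=0}^{d-1}\frac{\Gamma(n_0+j+1+ip)\,j!}{\Gamma(k_0+j+1+iq)\Gamma(n_0-k_0+j+1+i(p-q))}$ and regroup as one Gamma ratio of the shape in Lemma~\ref{pos-sem} times reciprocals of linear factors. For $j\ge1$, I would write $\frac{\Gamma(n_0+j+1+ip)}{\Gamma(k_0+j+1+iq)\Gamma(\cdots)}$ and pull out linear factors so that each becomes $\frac{\Gamma(n_0+1+ip)}{\Gamma(k_0+1+iq)\Gamma(n_0-k_0+1+i(p-q))}\cdot\frac{\prod_{m=1}^{j}(n_0+m+ip)}{\prod_{m=1}^j(k_0+m+iq)(n_0-k_0+m+i(p-q))}$. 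The numerator factors are the obstruction, since Lemma~\ref{pos-sem} only allows denominators. Hence I would instead absorb them by choosing the base parameter $n_0'=n_0+d-1$, so that all remaining linear factors land in the denominator. The conditions $(d-1)q\le p$ and $\frac{(n_0+d)q}{p}-1\le k_0$ are presumably exactly what is needed for the modified $k_0'$ to satisfy Lemma~\ref{pos-sem}'s window. Each $j$-factor is then SM, and a product of SM sequences is SM, since Hadamard products of moment sequences are moment sequences of product measures. The main obstacle is this bookkeeping: arranging each factor so that the leftover terms are purely reciprocal linear factors while the window condition holds.
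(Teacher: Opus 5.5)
There are genuine gaps in all three parts. The most concrete failing step is in (iii). You are right that increasing linear factors upstairs are fatal: for $\alpha>0$ the sequence $\alpha i+\beta$ is not even log-convex, since its Hankel minor is $\beta(2\alpha+\beta)-(\alpha+\beta)^2=-\alpha^2$. But your proposed cure, a common base $n_0'=n_0+d-1$, cannot work for $d\ge 2$. Take the $j=0$ factor $\frac{\Gamma(n_0+1+ip)}{\Gamma(k_0+1+iq)\,\Gamma(n_0-k_0+1+i(p-q))}$ and raise its top parameter to $n_0+d-1$. Producing only reciprocal linear factors then requires both $k_0'\le k_0$ and $n_0+d-1-k_0'\le n_0-k_0$; otherwise $\Gamma(k_0'+1+iq)/\Gamma(k_0+1+iq)$, or the analogous ratio, puts increasing factors in the numerator. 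Those two conditions are incompatible.

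The missing idea is to use a $j$-dependent base and to split only the $k$-factorial. With $n=n_0+ip$ and $k=k_0+iq$,
\[
\frac{(n+j)!\,j!}{(k+j)!\,(n-k+j)!}=\frac{\Gamma(n+j+1)}{\Gamma(k+1)\,\Gamma(n-k+j+1)}\prod_{\ell=0}^{j-1}\frac{\ell+1}{k+\ell+1}.
\]
This is exactly Lemma~\ref{pos-sem} with parameters $(n_0+j,k_0)$ and positive linear denominators. Its window is $(n_0+j+1)q/p-1\le k_0\le(n_0+j+1)q/p$. Intersecting over $0\le j\le d-1$ gives precisely the hypothesis: the upper bound comes from $j=0$ and the lower bound from $j=d-1$. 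Your Hadamard-product closure step is fine.

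In (ii) the asymptotics are never carried out, and the heuristic you state is wrong. The second difference of $\log\Gamma(ai+b)$ is $\frac{a}{i}+\frac{1/2-b}{i^2}+O(i^{-3})$, so it depends on $a$. Taken at face value, your ``$1-1-1$'' bookkeeping gives $-1/i$ per factor, which would make $C_i$ eventually log-\emph{concave}. What actually happens is that the $1/i$ terms cancel, because $p-q-(p-q)=0$. The sign is therefore decided at order $i^{-2}$. There the shifts $n_0+j+1$, $k_0+j+1$, $n_0-k_0+j+1$ give coefficient $j+\tfrac12$, so the second difference of $\log C_i$ is $\frac{d^2}{2i^2}+O(i^{-3})>0$. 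This matches the paper's $\frac{d^2q^2}{2k^2}$, obtained by expanding $\log\Delta_j(k)$ to second order. That computation is the whole content of (ii), and ``I expect'' does not supply it.

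In (i), log-concavity of $f_j(i)=\frac{(n+j)!}{(k+j)!\,(n-k+j)!}$ is only asserted. Yu's theorem is a real-rootedness statement about genuine binomial rays, and nothing there ``works for real shifts'' off the shelf. It is easy to close directly. If $C_i>0$, then $N:=n+j\ge K:=k+j$; put $L=n-k+j$ and $s=q-p$. Then
\[
\frac{f_j(i+1)f_j(i-1)}{f_j(i)^2}=\prod_{r=1}^{p}\frac{N+r}{N-p+r}\cdot\frac{K!^2}{(K+q)!\,(K-q)!}\cdot\frac{L!^2}{(L+s)!\,(L-s)!}\le\frac{(K+p)!\,(K-p)!}{(K+q)!\,(K-q)!}\le 1.
\]
Here each $\frac{N+r}{N-p+r}$ decreases in $N$, the map $t\mapsto\frac{(K+t)!(K-t)!}{K!^2}$ is nondecreasing, and $1/m!=0$ for $m<0$. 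If instead $C_i=0$, then $C_{i+1}=0$ as well. This is a more elementary alternative to the paper's route, which uses a factorial identity to reduce (i) to column log-concavity (Theorem~\ref{lcthm}) and column monotonicity.
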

	\begin{proof}
		{(i)} Let
	$$	
		\left\langle
		\begin{matrix}
			n\\k
		\end{matrix}
		\right\rangle_d
		:=H_d(n,k)=\frac{\langle n\rangle_d!}
		{\langle k\rangle_d!\,\langle n-k\rangle_d!}.
		$$
		To prove that $\left\lbrace C_i \right\rbrace _{i=0}^{\infty}$ is log-concave,
		it suffices to prove that
		$$H_d(n+p,k+q)H_d(n-p,k-q) \leq H_d(n,k)^2$$
		for $n\geq k$ and $p<q$.
	    Claim that 
	    \begin{align*}
	    	 H_d(n+p,k+q)H_d(n-p,k-q)=&H_d(n+p,n-k)H_d(n-p,n-k)\\
	    	 &\times
	    	\frac{H_d(n-k,q-p)}{H_d(n-k+q-p,q-p)}
	    	\frac{H_d(k-p,q-p)}{H_d(k+q,q-p)}.
	    \end{align*}
	    This follows from
	    \begin{equation*}
	    	\begin{aligned}
	    		\left\langle \begin{matrix}
	    			n+p\\
	    			k+q
	    		\end{matrix}\right\rangle _d
	    		\left\langle \begin{matrix}
	    			n-p\\
	    			k-q
	    		\end{matrix}\right\rangle _d
	    		=&
	    		\frac{\left\langle n+p  \right\rangle _d!   \left\langle n-p  \right\rangle _d!}
	    		{\left\langle k+q  \right\rangle _d! \left\langle n+p-k-q  \right\rangle _d! \left\langle k-q  \right\rangle _d! \left\langle n-p-k+q \right\rangle _d!}\\
	    		=&\left\langle \begin{matrix}
	    			n+p\\
	    			n-k
	    		\end{matrix}\right\rangle _d
	    		\left\langle \begin{matrix}
	    			n-p\\
	    			n-k
	    		\end{matrix}\right\rangle _d\frac{\left\langle \begin{matrix}
	    				n-k\\
	    				q-p
	    			\end{matrix}\right\rangle _d}{\left\langle \begin{matrix}
	    				n-k+q-p\\
	    				q-p
	    			\end{matrix}\right\rangle _d}
	    		\frac{\left\langle \begin{matrix}
	    				k-p\\
	    				q-p
	    			\end{matrix}\right\rangle _d}{\left\langle \begin{matrix}
	    				k+q\\
	    				q-p
	    			\end{matrix}\right\rangle _d}.
	    	\end{aligned}
	    \end{equation*}
	    Combining the log-concavity of the column sequence (Theorem~\ref{lcthm} (ii))
	    and the symmetry $H_d(n,k)=H_d(n,n-k)$,
	    we have
	    $$H_d(n+p,n-k)H_d(n-p,n-k)\le H_d(n,n-k)^2=H_d(n,k)^2.$$
	    Moreover, the column sequence is increasing, since
	    $$
	    {\left\langle \begin{matrix}
	    		n+1\\
	    		k
	    	\end{matrix}\right\rangle _d} \bigg/ {\left\langle \begin{matrix}
	    		n\\
	    		k
	    	\end{matrix}\right\rangle _d}=
	    \frac{ \left\langle n+1 \right\rangle _d}{\left\langle n+1-k \right\rangle _d}\ge 1.
	    $$
	   Thus we obtain that
	   $$
	   \frac{H_d(n-k,q-p)}{H_d(n-k+q-p,q-p)}
	   \frac{H_d(k-p,q-p)}{H_d(k+q,q-p)}\le 1.
	   $$
	Hence,
		$$H_d(n+p,k+q)H_d(n-p,k-q)\leq H_d(n,k)^2.$$

		(ii) Let $n=n_0+ip,$ and $k=k_0+iq$.	It suffices to show that
		$$
		\Delta(k):=
		\frac{H_d(n+p,k+q)H_d(n-p,k-q)}{H_d(n,k)^2}\ge 1
		$$
		for all sufficiently large $k$, where $n=n_0+p\frac{k-k_0}{q}$ and $p>q>0$.
		By~\eqref{defentry},
		$$
		H_d(n,k)
		=\prod_{j=0}^{d-1}\frac{\binom{n+j}{k}}{\binom{k+j}{k}}=
		\prod_{j=0}^{d-1}
		\frac{(n+j)!j!}{(k+j)!(n-k+j)!}.
		$$
		Hence
		$$
		\Delta(k)
		=
		\prod_{j=0}^{d-1}\Delta_j(k),
		$$
		where
		\begin{align*}
		\Delta_j(k)=&\frac{(n+p+j)!(n-p+j)!}{(n+j)!^2} \frac{(k+j)!^2}{(k+q+j)!(k-q+j)!} \frac{(n-k+j)!^2}{(n-k+p-q+j)!(n-k-p+q+j)!}\\
		=&\prod_{r=1}^{p}
		\frac{n+j+r}{n-p+j+r}
		\prod_{r=1}^{q}
		\frac{k-q+j+r}{k+j+r}
		\prod_{r=1}^{p-q}
		\frac{n-k-p+q+j+r}{n-k+j+r}.
		\end{align*}
   
		Since $n=n_0+p\frac{k-k_0}{q}$, taking logarithms yields
		$$
		\begin{aligned}
			\log\Delta_j(k)
			&=\sum_{r=1}^{p}
			\left[\log \left(\frac{p}{q}k+n_0-\frac{pk_0}{q}+j+r\right)-\log\left(\frac{p}{q}k+n_0-\frac{pk_0}{q}-p+j+r\right)\right]  \\
			+&\sum_{r=1}^{q}
			\left[\log(k-q+j+r)-\log(k+j+r)\right]  \\
			+&\sum_{r=1}^{p-q}
			\left[\log\left(\frac{p-q}{q}k+n_0-\frac{pk_0}{q}-p+q+j+r\right)-
			\log\left(\frac{p-q}{q}k+n_0-\frac{pk_0}{q}+j+r\right)\right].
		\end{aligned}
		$$
		We use the expansion
		$$
		\log(Ak+a)
		=
		\log(Ak)+\frac{a}{Ak}
		-\frac{a^2}{2A^2k^2}
		+O(k^{-3}),
		$$
		where $A>0$ and $a$ is independent of $k$. 
		Applying this expansion to the three sums above, the coefficient of $\log k$ vanishes
		and  the coefficient of $k^{-1}$ is
		\begin{align*}
		&\sum_{r=1}^{p} \left(\frac{q(n_0+j+r)-pk_0}{p}-\frac{q(n_0+j+r-p)-pk_0}{p}\right)
		+\sum_{r=1}^{q} \left((j+r-q)-(j+r)\right)\\
		+&\sum_{r=1}^{p-q} \left(\frac{q(n_0+j+r-p+q)-pk_0}{p-q}-\frac{q(n_0+j+r)-pk_0}{p-q}\right)\\
		=&pq-q^2-q(p-q)=0.
		\end{align*}
		The coefficient of 2$k^{-2}$ is
		\begin{align*}
		&\sum_{r=1}^{p} \left(-\frac{(q(n_0+j+r)-pk_0)^2}{p^2}+\frac{(q(n_0+j+r-p)-pk_0)^2}{p^2}\right)
		+\sum_{r=1}^{q} \left(-(j+r-q)^2+(j+r)^2\right)\\
		+&\sum_{r=1}^{p-q} \left(-\frac{(q(n_0+j+r-p+q)-pk_0)^2}{(p-q)^2}+\frac{(q(n_0+j+r)-pk_0)^2}{(p-q)^2}\right)\\
		=&\sum_{r=1}^{p}\frac{q(pq+2pk_0-2q(n_0+j+r))}{p}+\sum_{r=1}^{q} (2q(j+r)-q^2)\\
		+&\sum_{r=1}^{p-q} \frac{2q^2(n_0+j+r)-2pqk_0-q^2(p-q)}{p-q}\\
		=& q(2pk_0-2q(n_0+j)-q)+q^2(2j+1)+q(2q(n_0+j)-2pk_0+q)\\
		=&q^2(2j+1).
	\end{align*}
		Therefore
		$$
		\log\Delta_j(k)
		=
		\frac{q^2(2j+1)}{2k^2}
		+O(k^{-3})
		$$
		and
		$$
		\log\Delta(k)
		=
		\sum_{j=0}^{d-1}\log\Delta_j(k)
		=
		\frac{q^2}{2k^2}
		\sum_{j=0}^{d-1}(2j+1)
		+O(k^{-3})=\frac{d^2q^2}{2k^2}+O(k^{-3})
		$$
	    Then
		$$
		\log\Delta(k)
		=
		\frac{d^2q^2}{2k^2}
		+O(k^{-3})>0
		$$
		for all sufficiently large $k$. Hence
		$$
		\Delta(k)>1,
		$$
		which proves the eventual log-convexity.
		
		(iii) Recall that
		$$C_i= H_d(n_0+ip,k_0+iq)
		=\prod_{j=0}^{d-1}\frac{\binom{n_0+ip+j}{k_0+iq}}{\binom{k_0+iq+j}{k_0+iq}}=\prod_{j=0}^{d-1} C_{i}^j,$$
		where 	
		\begin{align*}
			C_{i}^j&=\frac{\binom{n_0+ip+j}{k_0+iq}}{\binom{k_0+iq+j}{k_0+iq}}
			=\frac{\binom{n_0+ip+j}{k_0+iq}}{\prod_{\ell=0}^{j-1}(k_0+iq+\ell+1)/j!}\\
			&=\frac{\Gamma(n_0+ip+j+1)}{\Gamma(k_0+iq+1) \Gamma((n_0-k_0)+i(p-q)+j+1)}
			\frac{1}{\prod_{\ell=0}^{j-1}(\frac{k_0}{\ell+1}+i\frac{q}{\ell+1}+1)}.
		\end{align*}
		Here by convention, the canonical product
		$$\prod_{\ell=0}^{j-1}\left(\frac{k_0}{\ell+1}+i\frac{q}{\ell+1}+1\right)$$
		reduces to 1 when $j=0$.
		By Lemma~\ref{pos-sem}, if $p>q>0$ and
		$$(n_0+j+1)q/p-1 \leq k_0 \leq (n_0+j+1)q/p,$$
		then $\left\{ C_i^j \right\}_{i=0}^{\infty}$ is an SM sequence for a fixed $j$.
		Since $(d-1)q\le p$,
		for $0\le j\le d-1$,
		when $p>q>0$ and
		$$(n_0+d)q/p-1 \leq k_0 \leq (n_0+1)q/p,$$
		each sequence $\left\{ C_i^j \right\}_{i=0}^{\infty}$ forms an SM sequence.
			
		Claim that if $\{a_k\}_{k=0}^{\infty}$ and $\{b_k\}_{k=0}^{\infty}$ are two Stieltjes moment sequences,
		then so is $\{a_kb_k\}_{k=0}^{\infty}$.
		The sequence $\{a_k \}_{k=0}^{\infty}$ (resp. $\{b_k \}_{k=0}^{\infty}$)
		is an SM sequence if and only if
		Hankel matrices $[a_{i+j}]_{i,j=0}^n$ and $[a_{i+j+1}]_{i,j=0}^n$ (resp. $[b_{i+j}]_{i,j=0}^n$ and $[b_{i+j+1}]_{i,j=0}^n$)
		are positive semidefinite.
		The famous Schur product theorem states that the Hadamard product of two positive semidefinite matrices
		is still positive semidefinite, see~\cite[Theorem 7.5.3]{HJ13}.
		Then $[a_{i+j}b_{i+j}]_{i,j=0}^n$ and $[a_{i+j+1}b_{i+j+1}]_{i,j=0}^n$ are still positive semidefinite.
		Thus $\{a_kb_k\}_{k=0}^{\infty}$ is an SM sequence.
        Hence,
		the Hadamard product of two SM sequences is still an SM sequence.
		Therefore, $\left\{ C_i \right\}_{i=0}^{\infty}$  is an SM sequence.
	\end{proof}
	
	\begin{rem}
		Note that in Theorem \ref{ray} (iii), 
		the inequality $(n_0+d)q/p-1\leq k_0\leq(n_0+1)q/p$ implies $(d-1)q\leq p$.
		It would be interesting  to study the SM properties of $\{C_i\}_{i=0}^{\infty}$ under the condition $(d-1)q>p$.
	\end{rem}

	\section{Asymptotic normality in Hoggatt triangles}
	
	The normal distribution consists of a symmetric bell-shaped curve
	with a downward-sloping curve on each side of the mode.
	Let $a(n,k)$ be a double-indexed sequence of nonnegative numbers.
	In combinatorics,
	when $a(n,k)$ is unimodal with respect to $k$ for sufficiently large $n$,
	it is natural to consider the asymptotic normality.
	
    Let $X_n$ be a random variable, and let
	$$p(n,k)=P(X_n=k)=\frac{a(n,k)}{\sum_ja(n,j)}$$
	denote the normalized probabilities.
	Following Bender~\cite{Ben73},
	we say that the sequence $a(n,k)$ (or equivalently, $X_n$) is {\it asymptotically normal by a central limit theorem},
	if
	\begin{equation}\label{clt}
		\lim_{n \rightarrow \infty}\sup_{x \in \mathbb{R}}\left| \sum_{k\leq \mu_n+x\sigma_n}p(n,k)-\frac{1}{\sqrt{2\pi}}\int_{-\infty}^{x}e^{\frac{-t^2}{2}}dt \right| =0,
	\end{equation}
	where $\mu_n$ and $\sigma_n^2$ are the mean and variance of $a(n,k)$, respectively. 

	The study of limit distributions of combinatorial sequences is useful for precisely
	predicting the properties of large structured combinatorial configurations.
	Many well-known combinatorial sequences enjoy central and local limit theorems.
	For example,
	the famous de Moivre-Laplace theorem states that
	the binomial coefficients $H_1(n,k)$ are asymptotically normal (by central and local limit theorems).
	We refer the reader to~\cite{Can15} for other examples.
	Chen et al.~\cite{CMW20} showed that the Narayana numbers $H_2(n,k)$ are asymptotically normal
	and Zhao~\cite{Zha24} showed that the refined Baxter numbers $H_3(n,k)$ are asymptotically normal.
	Both proofs rely on the fact that the row generating functions have only real zeros.
	Here we prove the general result that the $d$-Hoggatt numbers $H_d(n,k)$ are asymptotically normal
	by direct approximations.
	Throughout this section, 
	the symbols $O$, $o$ and $\sim$ have their usual meaning:
	
	$f(n)=O(g(n))$ means $f(n)/g(n)$ is bounded as $n\rightarrow\infty$;
	
	$f(n)=o(g(n))$ means $f(n)/g(n)\rightarrow0$ as $n\rightarrow\infty$;
	
	$f(n)\sim g(n)$ means $f(n)/g(n)\rightarrow1$ as $n\rightarrow\infty$.
	
	Suppose that  $a(n,k) \sim b(n,k)$.
	Note that it does not imply 
	$$
	\sum_k a(n,k) \sim \sum_k b(n,k).
	$$
	We say that $a(n,k) \sim b(n,k)$ {\it uniformly} for $k \in S(n)$ provided $a(n,k)=b(n,k)(1+o(1))$
	whenever $k \in S(n)$, where the constant in the $o$ is independent of $k$. 
	In this case we can write 
	\begin{equation*}\label{Asy-Sn}
		\sum_{k \in S(n)}a(n,k)=\sum_{k \in S(n)}b(n,k)+o\left(\sum_{k\in S(n)} |b(n,k)| \right)
	\end{equation*}
	and in particular 
	$$
	\sum_{k \in S(n)}a(n,k)\sim \sum_{k \in S(n)}b(n,k).
	$$
	See~\cite[p. 487]{Ben74} for instance.
	The following proof is motivated by Bender~\cite{Ben74}.

	\begin{thm}\label{asythm}
		For each $d\ge 1$,
		the $d$-Hoggatt numbers $H_d(n,k)$ are asymptotically normal,
		with the mean $\mu_n=\frac{n}{2}$ and variance $\sigma_n^2\sim\frac{n}{4d}$.
	\end{thm}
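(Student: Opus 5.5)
By the symmetry $H_d(n,k)=H_d(n,n-k)$, the mean is exactly $\mu_n=n/2$. The plan is to establish a local Gaussian approximation of $H_d(n,k)$ around $k=n/2$, uniformly for $|k-n/2|\le n^{1/2+\epsilon}$ with a fixed small $\epsilon$ (say $\epsilon<1/6$). From this I will read off both the variance and the central limit theorem, following Bender~\cite{Ben74}.

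\textbf{Step 1: the local expansion.} By \eqref{defentry},
$$H_d(n,k)=\prod_{j=0}^{d-1}\frac{j!\,(n+j)!}{(k+j)!\,(n-k+j)!}.$$
Each factor equals $\binom{n}{k}$ multiplied by a rational factor
$$\frac{j!\,(n+1)\cdots(n+j)}{(k+1)\cdots(k+j)\,(n-k+1)\cdots(n-k+j)}.$$
Writing $k=n/2+t$ with $t=O(n^{1/2+\epsilon})$, this rational factor equals
$$j!\,n^{-j}\,(1/4)^{-j}\,(1+o(1))$$
uniformly in $t$, because $(k+i)(n-k+i)=n^2/4-t^2+O(n)=(n^2/4)(1+o(1))$. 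For $\binom{n}{k}$ I will use de Moivre--Laplace in its uniform local form:
$$\binom{n}{k}\sim \frac{2^n}{\sqrt{\pi n/2}}\,e^{-2t^2/n}.$$
This holds uniformly for $|t|\le n^{1/2+\epsilon}$ because the cubic Stirling correction is $O(t^3/n^2)=o(1)$ when $\epsilon<1/6$. Taking the product over $j$ gives
$$H_d(n,k)\sim c_d(n)\,e^{-2dt^2/n}$$
uniformly, where $c_d(n)$ does not depend on $k$.

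\textbf{Step 2: the tails are negligible.} The row sequence is log-concave (Theorem~\ref{lcthm}) and symmetric, so it is unimodal with its peak at $n/2$. For $|t|>n^{1/2+\epsilon}$ each entry is therefore at most the value at $|t|=n^{1/2+\epsilon}$, which by Step 1 is $c_d(n)e^{-2dn^{2\epsilon}}$. There are at most $n+1$ such $k$, so the tail total is at most
$$(n+1)\,c_d(n)\,e^{-2dn^{2\epsilon}}.$$
This is $o(c_d(n)\sqrt n)$. The tail contribution to the second moment is similarly bounded by $n^2(n+1)c_d(n)e^{-2dn^{2\epsilon}}$, which is $o(c_d(n)\,n^{3/2})$.

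\textbf{Step 3: variance and CLT.} Use the uniform-sum principle quoted before the theorem, and replace the sums of $e^{-2dt^2/n}$ and $t^2e^{-2dt^2/n}$ over $k$ by integrals; the Riemann-sum error is negligible since the spacing is $1$ and the scale is $\sqrt n$. This gives
$$\sum_k H_d(n,k)\sim c_d(n)\sqrt{\pi n/(2d)}\qquad\text{and}\qquad \sigma_n^2\sim \frac{n}{4d}.$$
The same computation for partial sums $\sum_{k\le n/2+x\sigma_n}$ yields pointwise convergence of the distribution function to $\Phi(x)$. Convergence of monotone distribution functions to the continuous limit $\Phi$ is automatically uniform in $x$ (Pólya's theorem), which gives \eqref{clt}.

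\textbf{Main obstacle.} The delicate step is Step 1. The approximation must hold uniformly in $k$ over a window wider than $\sqrt n$, so that the tails can be discarded. This requires controlling the Stirling error terms in $t^3/n^2$ and checking that the rational correction factors are uniformly $1+o(1)$. The window choice $n^{1/2+\epsilon}$ with $\epsilon<1/6$ handles both requirements.
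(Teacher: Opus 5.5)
Your argument is correct. It shares the paper's overall architecture: a uniform local Gaussian estimate on a window slightly wider than $\sqrt n$, negligible tails, Riemann sums, and uniformity of the convergence of distribution functions. It obtains the two main ingredients differently, though.

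For the local estimate, the paper sets $A_{m,d}(s)=H_d(2m,m+s)$ and Taylor-expands $\log\bigl(A_{m,d}(s)/A_{m,d}(0)\bigr)=-\sum_{j}\sum_{r\le s}\log\frac{m+j+r}{m+j-r+1}$ directly. This gives $-ds^2/m+O(s^2/m^2+s^4/m^3)$ for $|s|\le m^{2/3}$. You instead factor $H_d(n,k)$ as $\binom{n}{k}^d$ times a correction factor that is asymptotically independent of $k$ on your window, and then import the uniform local de Moivre--Laplace theorem. This makes transparent why the variance is $n/(4d)$: the $d$-th power of the binomial Gaussian has the binomial variance $n/4$ divided by $d$.

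For the tails, the paper proves $A_{m,d}(s+1)/A_{m,d}(s)\le e^{-c_1 s/m}$ and iterates it. You use symmetry plus unimodality to dominate every tail term by the boundary value, which your local estimate already makes of size about $c_d(n)e^{-2dn^{2\epsilon}}$. Unimodality is available from Theorem~\ref{lcthm}, or directly from $H_d(n,k+1)/H_d(n,k)=\prod_j\frac{n-k+j}{k+1+j}$.

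The paper's route is self-contained and carries explicit error terms. Yours is shorter, reuses a classical theorem and an earlier result, and treats odd $n$ together with even $n$; the paper only sketches the odd case.

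The one point to tidy is the boundary value: take it at the last lattice point with $|t|\le n^{1/2+\epsilon}$. This changes the exponent only by $O(n^{\epsilon-1/2})$, so the tail bound is unaffected.
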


	\begin{proof}
		By the symmetry
		$$
		H_d(n,k)=H_d(n,n-k),
		$$
		we immediately have $\mu_n=\frac{n}{2}$.

		We first assume that $n=2m$. Let
		$$
		A_{m,d}(s):=H_d(2m,m+s),\qquad -m\le s\le m.
		$$
		By~\eqref{defentry},
		$$
		A_{m,d}(s)
		=\prod_{j=0}^{d-1}\frac{\binom{2m+j}{m+s}}{\binom{m+s+j}{m+s}}=
		\prod_{j=0}^{d-1}
		\frac{(2m+j)!j!}{(m+s+j)!(m-s+j)!}.
		$$
		Hence
		\begin{equation*}
		\frac{A_{m,d}(s)}{A_{m,d}(0)}
		=
		\prod_{j=0}^{d-1}
		\frac{(m+j)!^2}{(m+s+j)!(m-s+j)!}=\prod_{j=0}^{d-1}
		\prod_{r=1}^{s}\frac{m+j-r+1}{m+j+r}.
		\end{equation*}
		For $s\ge0$, taking logarithms yields
		\begin{equation}\label{estimate}
		\log\frac{A_{m,d}(s)}{A_{m,d}(0)}
		=
		-\sum_{j=0}^{d-1}\sum_{r=1}^s
		\log\frac{m+j+r}{m+j-r+1}.
		\end{equation}
		Recall that $$\log(x+r)=\log x+\frac{r}{x}-\frac{r^2}{2x^2}+O\left(\frac{r^3}{x^3}\right).$$
		For fixed $j$, $0\le j\le d-1,$ let
		$$
		x=m+j.
		$$ 
		Since $d$ is fixed, we have $x\sim m$. Moreover, for $1\le r\le s=o(m)$,
		the term in~\eqref{estimate} is
		$$
		\log\frac{x+r}{x-r+1}
		=
		\frac{2r-1}{x}
		+
		O\left(\frac{r}{x^2}+\frac{r^3}{x^3}\right),
		$$
		uniformly in $r$. Hence, for $1\le r\le s=o(m)$,
		$$
		\begin{aligned}
			\log\frac{A_{m,d}(s)}{A_{m,d}(0)}
			=
			-\sum_{j=0}^{d-1}\sum_{r=1}^{s}
			\left(
			\frac{2r-1}{m+j}
			+
			O\left(\frac{r}{(m+j)^2}+\frac{r^3}{(m+j)^3}\right)
			\right)
		\end{aligned}
		$$
		Since
		$$
		\sum_{r=1}^{s}(2r-1)=s^2,
		\qquad
		\sum_{r=1}^{s}r=O(s^2),
		\qquad \sum_{r=1}^{s}r^3=O(s^4),
		$$
		and $d$ is fixed, we get
		$$
		\log\frac{A_{m,d}(s)}{A_{m,d}(0)}
		=
		-s^2\sum_{j=0}^{d-1}\frac{1}{m+j}
		+
		O\left(\frac{s^2}{m^2}+\frac{s^4}{m^3}\right).
		$$
		By
		$$
		\sum_{j=0}^{d-1}\frac{1}{m+j}
		=
		\frac{d}{m}
		+
		O\left(\frac{1}{m^2}\right),
		$$
		we obtain that uniformly for $|s|\le m^{2/3}$,
		$$
		\log\frac{A_{m,d}(s)}{A_{m,d}(0)}
		=
		-\frac{ds^2}{m}
		+
		O\left(\frac{s^2}{m^2}\right)
		+
		O\left(\frac{s^2}{m^2}+\frac{s^4}{m^3}\right)=-\frac{ds^2}{m}
		+
	   O\left(\frac{s^2}{m^2}+\frac{s^4}{m^3}\right).
		$$
		In particular, 
		$$
		\log\frac{A_{m,d}(s)}{A_{m,d}(0)}
		=
		-\frac{d s^2}{m}
		+
	O\left(\frac{s^2}{m^2}+\frac{s^4}{m^3}\right).
		$$
		Since $n=2m$, we have
		\begin{equation}\label{keypoint}
		\frac{A_{m,d}(s)}{A_{m,d}(0)}
		=
		\exp\left(-\frac{2ds^2}{n}\right)
		\left(1+o(1)\right)
		\end{equation}
		uniformly for $|s|\le m^{2/3}$.
		In particular, 	this estimate holds uniformly for \textbf{$|s|=O(\sqrt n)$.}
		
		Now we show that the contribution of
		$|s|>m^{2/3}$ is negligible. Indeed, by~\eqref{defentry},
		$$
		\frac{A_{m,d}(s+1)}{A_{m,d}(s)}
		=
		\prod_{j=0}^{d-1}
		\frac{m-s+j}{m+s+j+1}
		$$
		for $0\le s<m$.
		Taking logarithms yields
		$$
		\log \frac{A_{m,d}(s+1)}{A_{m,d}(s)}=\sum_{j=0}^{d-1}\log\frac{m-s+j}{m+s+j+1}
		=\sum_{j=0}^{d-1}\log\left( 1-\frac{2s+1}{m+s+j+1}\right) .
		$$
		For $0\leq s \leq m/2$, 
		we get $0<\frac{2s+1}{m+s+j+1} <1$.
		Using the inequality $\log(1-z)\leq -z$ for $0<z<1$, 
		we have
		$$	
		\log \frac{A_{m,d}(s+1)}{A_{m,d}(s)}
		\leq -\sum_{j=0}^{d-1}\frac{2s+1}{m+s+j+1}
		\leq -\frac{2sd}{m+s+d}
		\leq -c_1\frac{s}{m}
		$$
		for some constant $c_1>0$ depending only on $d$.	
		Thus, for $0\le s\le m/2$, we have
		$$
		\frac{A_{m,d}(s+1)}{A_{m,d}(s)}
		\le
		\exp\left(-c_1\frac{s}{m}\right).
		$$
		 Iterating this bound yields
		$$
		A_{m,d}(s)\le A_{m,d}(0)\prod_{r=0}^{s-1}\exp\left(-c_1\frac{r}{m}\right)
		=A_{m,d}(0)\exp\left(-c_1\frac{s^2-s}{2m}\right)\le A_{m,d}(0)
		$$
		for some constant $c_1>0$.
		For \(s>m/2\), since \(A_{m,d}(s)\) is
		decreasing for \(s\geq0\), we also have
		\[
		A_{m,d}(s)
		\leq
		A_{m,d}(\lfloor m/2\rfloor)
		\leq
		A_{m,d}(0)e^{-c_2m}
		\]
		for some constant $c_2>0$.
		By symmetry, the same estimates hold for negative \(s\). Consequently,
		\begin{align*}
			\sum_{|s|>m^{2/3}} A_{m,d}(s)=&\sum_{m^{2/3}\le |s|\le m/2} A_{m,d}(s)+\sum_{m/2< |s|\le m} A_{m,d}(s)\\
		    \le &  m A_{m,d}(0)\exp\left(-c_1\frac{(m^{2/3})^2-m^{2/3}}{2m}\right)+mA_{m,d}(0)e^{-c_2m}\\
			=&m A_{m,d}(0) e^{-c_3m^{1/3}}+mA_{m,d}(0)e^{-c_2m}=o\bigl(A_{m,d}(0)\bigr)
		\end{align*}

		Therefore, by~\eqref{keypoint}
		\begin{align*}
		Z_{n,d}&:=\sum_{k=0}^n H_{d}(n,k)
		=\sum_{s=-m}^m A_{m,d}(s)\\
		&\sim\sum_{|s|\le m^{2/3}} A_{m,d}(s)\sim
		A_{m,d}(0)
		\sum_{s\in\mathbb Z}
		\exp\left(-\frac{2ds^2}{n}\right).
		\end{align*}
		Recall that $n=2m.$
		Using the standard Gaussian sum estimate
		$$
		\sum_{s\in\mathbb Z}e^{-2ds^2/n}
		\sim
		\sqrt{\frac{\pi n}{2d}},
		$$
		we get
		$$
		Z_{n,d}
		\sim
		A_{m,d}(0)\sqrt{\frac{\pi n}{2d}}.
		$$
		Consequently, uniformly for $s=O(\sqrt n)$,
		$$
		\frac{A_{m,d}(s)}{Z_{n,d}}
		\sim
		\sqrt{\frac{2d}{\pi n}}
		\exp\left(-\frac{2ds^2}{n}\right).
		$$
		Since $s=k-n/2$, this is exactly
		$$
		\frac{H_d(n,k)}{Z_{n,d}}
		\sim
		\frac{1}{\sqrt{2\pi n/(4d)}}
		\exp\left(-\frac{(k-n/2)^2}{2n/(4d)}\right).
		$$

		Let
		\[
		\tau_n=\sqrt{\frac{n}{4d}} .
		\]
		For fixed \(x\in\mathbb R\), by the above estimate and the tail estimate,
		\[
		\begin{aligned}
			P\left(\frac{X_n-n/2}{\tau_n}\le x\right)
			&=
			\sum_{s\le x\tau_n}\frac{A_{n/2,d}(s)}{Z_{n,d}}  \\
			&=
			\sum_{-(n/2)^{2/3}\le s\le x\tau_n}
			\frac{A_{n/2,d}(s)}{Z_{n,d}}
			+o(1)  \\
			&=
			\frac1{\tau_n}
			\sum_{-(n/2)^{2/3}\le s\le x\tau_n}
			\frac1{\sqrt{2\pi}}
			\exp\left(-\frac{s^2}{2\tau_n^2}\right)
			+o(1).
		\end{aligned}
		\]
		Consequently, uniformly for \(|s|\le (n/2)^{2/3}\),
		\[
		\frac{A_{n/2,d}(s)}{Z_{n,d}}
		=
		\frac1{\tau_n}
		\frac1{\sqrt{2\pi}}
		\exp\left(-\frac{s^2}{2\tau_n^2}\right)(1+o(1)),
		\]
		where \(\tau_n=\sqrt{n/(4d)}\).
		Since the last sum is a Riemann sum, we obtain
		\[
		\frac1{\tau_n}
		\sum_{-(n/2)^{2/3}\le s\le x\tau_n}
		\frac1{\sqrt{2\pi}}
		\exp\left(-\frac{s^2}{2\tau_n^2}\right)
		\longrightarrow
		\frac1{\sqrt{2\pi}}\int_{-\infty}^{x}e^{-t^2/2}\,dt .
		\]
		By the same uniform estimate in the central range and the above tail estimate, we have
		\[
  	  \sum_{s=-m}^m s^2 A_{m,d}(s)
  	\sim
	A_{m,d}(0)\sum_{s\in\mathbb Z}s^2
	\exp\left(-\frac{2ds^2}{n}\right).
		\]
		Together with
		\[
		Z_{n,d}
		\sim
		A_{m,d}(0)\sum_{s\in\mathbb Z}
		\exp\left(-\frac{2ds^2}{n}\right),
		\]
		it follows that
		\[
		\sigma_n^2
		\sim
		\frac{
			\sum_{s\in\mathbb Z}s^2 e^{-2ds^2/n}
		}{
			\sum_{s\in\mathbb Z}e^{-2ds^2/n}
		}
		\sim
		\frac{n}{4d}.
		\]
		Thus
		\[
		\sigma_n\sim \tau_n.
		\]
		Therefore
		\[
		\frac{X_n-n/2}{\sqrt{n/4d}}\longrightarrow N(0,1).
		\]
		Since the limiting distribution function is continuous, pointwise convergence of distribution functions implies uniform convergence in $x$.
		
		The case where $n$ is odd is treated in the same way by writing
		$k=n/2+s$; the half-integer shift changes the estimates only by $O(1)$ and does not affect
		the asymptotics. This completes the proof.
	\end{proof}

	\section{Total positivity of Hoggatt triangles}
	
	The purpose of this section is to prove the total positivity of the Hoggatt triangles.
	It is folklore~\cite{Pin10} that Pascal's triangle $[H_1(n,k)]_{n,k\ge 0}$ is totally positive.
	Wang and Yang~\cite{WY18} showed that the Narayana triangle $[H_2(n,k)]_{n,k\ge 0}$ and the Narayana square $[H_2(n+k,k)]_{n,k\ge 0}$ are totally positive.
	Inspired by Wang and Yang,
	we consider the total positivity of the $d$-Hoggatt triangle.
	
	A sequence $T=\{ \gamma_k \}_{k=0}^{\infty}$ of real numbers is called a {\it multiplier sequence} if, whenever the real polynomial $p(x)=\sum_{k=0}^{n}a_kx^k$ has only real zeros, the polynomial $T[p(x)]=\sum_{k=0}^{n}\gamma_ka_kx^k$ also has only real zeros. By the definition of multiplier sequence, we can obtain the following result.
	\begin{lem}\label{Hadamard-ms}
		Let $\left\lbrace  \gamma_k \right\rbrace _{k=0}^{\infty}$ and $\left\lbrace  \tau_k \right\rbrace _{k=0}^{\infty}$ be multiplier sequences.
		Then so is $\left\lbrace \gamma_k\tau_k \right\rbrace _{k=0}^{\infty}$.
	\end{lem}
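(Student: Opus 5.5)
The proof is a direct composition argument straight from the definition of a multiplier sequence. Let $p(x)=\sum_{k=0}^{n}a_kx^k$ be any real polynomial with only real zeros. We want to show that $\sum_{k=0}^{n}\gamma_k\tau_k a_k x^k$ also has only real zeros.

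First apply the multiplier sequence $\{\tau_k\}_{k=0}^{\infty}$ to $p$. By definition, the polynomial
$$q(x)=\sum_{k=0}^{n}\tau_k a_k x^k$$
has only real zeros. It is a real polynomial because the $\tau_k$ are real.

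Next apply $\{\gamma_k\}_{k=0}^{\infty}$ to $q$. Again by definition, the polynomial
$$\sum_{k=0}^{n}\gamma_k(\tau_k a_k)x^k=\sum_{k=0}^{n}(\gamma_k\tau_k)a_kx^k$$
has only real zeros.

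Since $p$ was arbitrary, $\{\gamma_k\tau_k\}_{k=0}^{\infty}$ is a multiplier sequence. In operator language, $T_{\gamma\tau}=T_\gamma\circ T_\tau$, and a composition of maps preserving real-rootedness preserves real-rootedness.

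The only point needing care is degenerate output. If some $\tau_k a_k$ vanish, $q$ may drop in degree or even be the zero polynomial. We adopt the standard convention used in this theory: constants, including $0$, count as having only real zeros. Under that convention the intermediate $q$ remains a legitimate input to $T_\gamma$, and the argument goes through unchanged. There is no substantive obstacle beyond stating this convention.
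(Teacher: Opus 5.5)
Your proposal is correct and matches the paper, which derives the lemma directly from the definition of a multiplier sequence; writing $T_{\gamma\tau}=T_\gamma\circ T_\tau$ is exactly that argument. Your remark on the degenerate intermediate polynomial is a harmless extra, since if $T_\tau[p]\equiv 0$ then $T_{\gamma\tau}[p]\equiv 0$ as well, so any convention that permits the first also permits the second.
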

	
	Let $\varphi(x)=\sum_{k=0}^{\infty}\frac{\gamma_k}{k!}x^k$.
	Then the sequence $\{\gamma_k\}_{k=0}^{\infty}$
	is a multiplier sequence if and only if
	either $\varphi(x)$ or $\varphi(-x)$ defines an entire function that can be written as
	\begin{equation}\label{ms-tran}
		cx^me^{\sigma x}\prod_{j=1}^{\infty}(1+\alpha_{j}x),
	\end{equation}
	where $c \in \mathbb{R}$, $m \in \mathbb{N}$, $\sigma \geq 0$, $\alpha_j \geq 0$ for all $j \in \mathbb{N}$, and $\sum_{j\geq1}\alpha_j<\infty$ (see \cite{Lev80} for instance).
	Comparing \eqref{ms-tran} with the generating function of PF sequences~\eqref{pf-c},
	the following result is immediate.
	\begin{lem}\label{MSPF}
		If a sequence $\left\lbrace \gamma_k \right\rbrace _{k=0}^{\infty}$ of nonnegative numbers is a multiplier sequence,
		then $\left\lbrace \gamma_k/k! \right\rbrace _{k=0}^{\infty}$ is a P\'olya frequency sequence.
	\end{lem}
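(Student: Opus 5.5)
The plan is to compare the characterization \eqref{ms-tran} of multiplier sequences directly with the PF characterization \eqref{pf-c}. Let $\{\gamma_k\}_{k=0}^{\infty}$ be a multiplier sequence with all $\gamma_k\ge 0$. Then by \eqref{ms-tran}, either $\varphi(x)=\sum_{k\ge0}\frac{\gamma_k}{k!}x^k$ or $\varphi(-x)$ equals $cx^me^{\sigma x}\prod_{j\ge1}(1+\alpha_jx)$ with $\sigma,\alpha_j\ge0$ and $\sum_j\alpha_j<\infty$. The generating function of $\{\gamma_k/k!\}$ is exactly $\varphi(x)$, so we need to show that $\varphi(x)$ itself, rather than $\varphi(-x)$, has this form with $c>0$. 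Once that is done, this is the special case of \eqref{pf-c} in which all $\beta_j=0$. Moreover $\varphi$ is entire, hence analytic near the origin, and \eqref{pf-c} gives $\{\gamma_k/k!\}\in\mathrm{PF}$.

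\textbf{Case 1: $\varphi(x)$ has the form \eqref{ms-tran}.} The lowest-order nonzero Taylor coefficient of $\varphi$ is $c$, at $x^m$, since every other factor equals $1$ at $x=0$. This coefficient is $\gamma_m/m!\ge0$ and is nonzero, so $c>0$. We may assume the sequence is not identically zero; otherwise the statement is trivial or degenerate.

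\textbf{Case 2: only $\varphi(-x)$ has the form \eqref{ms-tran}.} Write $\varphi(-x)=cx^me^{\sigma x}\prod_j(1+\alpha_jx)$. All factors other than $x^m$ have nonnegative Taylor coefficients, because products of power series with nonnegative coefficients again have nonnegative coefficients, and the infinite product converges locally uniformly since $\sum\alpha_j<\infty$. Hence the coefficients of $\varphi(-x)$ are all of the sign of $c$. On the other hand, these coefficients are $(-1)^k\gamma_k/k!$, which alternate in sign wherever they are nonzero.

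So either $\varphi(-x)$ has only one nonzero coefficient, or its nonzero coefficients are all at indices of one parity. The second alternative is impossible unless $\sigma=0$ and all $\alpha_j=0$. Indeed, if $\sigma>0$ or some $\alpha_j>0$, then the coefficients at both $x^m$ and $x^{m+1}$ are nonzero: the $x^{m+1}$ coefficient is $c(\sigma+\sum_j\alpha_j)\neq0$. These two coefficients have opposite parity, so they must have opposite signs, contradicting the fact that all coefficients share the sign of $c$.

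Therefore in Case 2 we have $\varphi(x)=c(-1)^mx^m$. This is a monomial with positive coefficient $\gamma_m/m!$, so it is already of the form \eqref{pf-c}, and the sequence is PF (a single nonzero term is trivially PF).

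The main subtlety is this second case: the sign ambiguity in $\varphi(\pm x)$ must be ruled out using the nonnegativity of the $\gamma_k$, by means of the coefficient-sign argument above. Everything else is a direct identification of \eqref{ms-tran} as a special case of \eqref{pf-c}.
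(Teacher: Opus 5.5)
Your proof is correct and follows the paper's approach: the paper says only that the lemma is immediate from comparing \eqref{ms-tran} with \eqref{pf-c}, i.e.\ the case where all $\beta_j=0$. Your extra checks, that $c>0$ and that for nonnegative $\gamma_k$ the $\varphi(-x)$ alternative forces $\varphi$ to be a monomial, supply details the paper leaves implicit.
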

	
	Let $A=[a_{n,k}]_{n,k\geq0}$ and $B=[b_{n,k}]_{n,k\geq 0}$ be two matrices.
	If there exist positive numbers $x_n$ and $y_k$ such that $b_{n,k}=x_ny_ka_{n,k}$ for all $n$ and $k$,
	then we denote $a_{n,k} \simeq b_{n,k}$ and $A \simeq B$.
	\begin{lem}[{\cite[Proposition 2.1]{WY18}}]\label{wyprop}
		Suppose that $A \simeq B$.
		Then the matrix $A$ is totally positive if and only if  the matrix $B$ is totally positive.
	\end{lem}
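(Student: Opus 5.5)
The plan is to write the relation $A\simeq B$ as multiplication by positive diagonal matrices and then compare each minor of $B$ with the corresponding minor of $A$ directly, using only multilinearity of the determinant. Set $X=\mathrm{diag}(x_0,x_1,\ldots)$ and $Y=\mathrm{diag}(y_0,y_1,\ldots)$, so that $b_{n,k}=x_ny_ka_{n,k}$ says $B=XAY$. This product is well defined even for infinite matrices, since each entry involves only one term.

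First, fix finite index sets $I=\{i_1<\cdots<i_r\}$ and $J=\{j_1<\cdots<j_r\}$, and consider the $r\times r$ submatrix $B[I,J]=\bigl(x_{i_s}y_{j_t}a_{i_s,j_t}\bigr)_{s,t=1}^r$. Row $s$ carries the common factor $x_{i_s}$ and column $t$ carries the common factor $y_{j_t}$. Pulling these out row by row and column by column gives
$$
\det B[I,J]=\Bigl(\prod_{s=1}^r x_{i_s}\Bigr)\Bigl(\prod_{t=1}^r y_{j_t}\Bigr)\det A[I,J].
$$
Since all $x_n,y_k>0$, the prefactor is strictly positive. Hence $\det B[I,J]\ge 0$ if and only if $\det A[I,J]\ge 0$, and this holds for every $r$ and every choice of $I$ and $J$.

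Second, if $A$ is totally positive, every minor of $A$ is nonnegative, so every minor of $B$ is nonnegative by the identity above. For the converse, note that $\simeq$ is symmetric: $a_{n,k}=x_n^{-1}y_k^{-1}b_{n,k}$ with $x_n^{-1},y_k^{-1}>0$, so $B\simeq A$. The same argument with the roles of $A$ and $B$ exchanged then shows that $B$ totally positive implies $A$ totally positive. Alternatively, the converse can be read off from the displayed identity by dividing by the positive prefactor.

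There is no substantive obstacle here. The only point needing care is that the scaling factors are strictly positive and not merely nonnegative, which is exactly what allows the sign equivalence to run in both directions. The same reasoning also shows that total positivity of order $r$ is preserved under $\simeq$.
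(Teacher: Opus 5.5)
Your proof is correct and complete. Each minor of $B$ equals the corresponding minor of $A$ times the strictly positive factor $\prod_s x_{i_s}\prod_t y_{j_t}$, so nonnegativity transfers in both directions. The paper gives no proof of its own and only cites this result from Wang and Yang; your diagonal-scaling argument, $B=XAY$, is the standard reasoning behind it.
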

	
	We obtain the following results.

	\begin{thm}\label{tp}
		For each $d\ge1$, both the $d$-Hoggatt triangle
	$$
	H_d=[H_d(n,k)]_{n,k\ge0}
	$$
	and the $d$-Hoggatt square
	$$
	H_d^\ulcorner=[H_d(n+k,k)]_{n,k\ge0}
	$$
	are totally positive.
	\end{thm}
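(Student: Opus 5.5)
The strategy is to reduce both matrices, via Lemma~\ref{wyprop}, to Toeplitz matrices of Pólya frequency sequences; such Toeplitz matrices are totally positive by definition. The first step is to rewrite \eqref{defentry} in a separated form. Set
$$F(m):=\langle m\rangle_d!=\prod_{j=1}^{m}\binom{j+d-1}{d}.$$
Then $H_d(n,k)=F(n)/\bigl(F(k)F(n-k)\bigr)$ for $0\le k\le n$. Taking $x_n=1/F(n)$ and $y_k=F(k)$, we get
$$H_d(n,k)\simeq \frac{1}{F(n-k)}=:a_{n-k},$$
with $a_m=0$ for $m<0$. So $H_d\simeq[a_{n-k}]_{n,k\ge0}$, which is the Toeplitz matrix of $\{a_m\}=\{1/\langle m\rangle_d!\}$. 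By Lemma~\ref{wyprop}, it suffices to prove that $\{1/\langle m\rangle_d!\}_{m\ge0}$ is a PF sequence.

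For that I would use Lemmas~\ref{Hadamard-ms} and \ref{MSPF}. Since $\langle j\rangle_d=\frac{1}{d!}\prod_{i=0}^{d-1}(j+i)$, we have
$$\langle m\rangle_d!=\frac{1}{(d!)^m}\prod_{i=0}^{d-1}\frac{(m+i)!}{i!}.$$
Hence
$$\frac{1}{\langle m\rangle_d!}=\frac{\gamma_m}{m!},\qquad \gamma_m=c\,(d!)^m\prod_{i=1}^{d-1}\frac{i!}{(m+i)!},$$
where the $i=0$ factor $1/m!$ has been pulled out. By Lemma~\ref{MSPF}, it is enough to show that $\{\gamma_m\}$ is a multiplier sequence. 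Lemma~\ref{Hadamard-ms} reduces this to factors. The geometric factor $(d!)^m$ is trivially a multiplier sequence, since $p(x)\mapsto p(d!\,x)$. That leaves $\{i!/(m+i)!\}_{m\ge0}$ for each $i\ge1$.

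The main obstacle is showing that $\{1/(m+i)!\}_{m}$, equivalently $\{i!/(m+i)!\}$, is a multiplier sequence. Via the Laguerre–Pólya criterion \eqref{ms-tran}, this needs
$$\varphi(x)=\sum_m \frac{x^m}{m!\,(m+i)!}$$
to be of the form \eqref{ms-tran}. That series equals $x^{-i/2}I_i(2\sqrt{x})$ (a Bessel function), whose zeros lie in $x<0$. Being entire of order $1/2$, it is therefore $c\prod(1+\alpha_jx)$ with $\alpha_j>0$, as required. An alternative route I would keep in reserve is Laguerre's theorem: $\{1/\Gamma(m+i+1)\}$ is a multiplier sequence because $1/\Gamma(x+i+1)$ belongs to the Laguerre–Pólya class with zeros at negative integers. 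Both facts are classical, and I would cite them rather than reprove them. This is also the key input in Dilks' proof of Lemma~\ref{Dilks}, so the same citation is natural.

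For the square $H_d^\ulcorner$, the same separation gives
$$H_d(n+k,k)=\frac{F(n+k)}{F(k)F(n)}\simeq F(n+k),$$
so $H_d^\ulcorner\simeq[F(n+k)]_{n,k\ge0}$, the Hankel matrix of $\{\langle m\rangle_d!\}$. By the equivalence (i)$\Leftrightarrow$(ii) recalled in the introduction, it suffices to show that $\{\langle m\rangle_d!\}$ is a Stieltjes moment sequence. We have
$$\langle m\rangle_d!=(d!)^{-m}\prod_{i=0}^{d-1}\frac{(m+i)!}{i!}.$$
Each factor is an SM sequence: $(d!)^{-m}$ is the moment sequence of a point mass, and $(m+i)!/i!=\frac{1}{i!}\int_0^\infty x^m\,x^ie^{-x}\,dx$. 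The Hadamard product of SM sequences is SM, as shown via the Schur product theorem in the proof of Theorem~\ref{ray}~(iii). This half is routine. The only real difficulty is the multiplier-sequence step for the triangle.
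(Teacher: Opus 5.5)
Your proposal is correct and follows essentially the same route as the paper. Diagonal scaling via Lemma~\ref{wyprop} reduces $H_d$ to the Toeplitz matrix of $\frac{1}{m!}\prod_{j=2}^{d}\frac{1}{(j)_m}$ (your extra geometric factor $(d!)^m$ is harmless), which is PF by Laguerre's multiplier sequences $\{1/(t)_m\}$ together with Lemmas~\ref{Hadamard-ms} and~\ref{MSPF}; it also reduces $H_d^\ulcorner$ to the Hankel matrix of $\prod_{j=0}^{d-1}(m+j)!$, a Hadamard product of Stieltjes moment sequences. The only cosmetic differences are that you start from the factorial form $\langle n\rangle_d!/(\langle k\rangle_d!\,\langle n-k\rangle_d!)$ rather than the binomial-product form, and you justify $\{i!/(m+i)!\}$ via the Bessel-function factorization, where the paper simply cites Laguerre.
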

	\begin{proof}
	Note that $1/r!=0$ when $r<0$.	By  \eqref{defentry},
	$$
	H_d(n,k)
	=
	\prod_{j=0}^{d-1}
	\frac{\binom{n+j}{k}}{\binom{k+j}{k}}.
	$$
	Since $\binom{k+j}{k}$ depends only on $k$, multiplying the $k$-th column by the positive
	number $\prod_{j=0}^{d-1}\binom{k+j}{k}$ does not affect total positivity. Hence
	$$
	H_d(n,k)\simeq
	\prod_{j=0}^{d-1}\binom{n+j}{k}.
	$$
	Furthermore,
	$$
	\binom{n+j}{k}
	=
	\frac{(n+j)!}{k!(n-k+j)!}
	\simeq
	\frac{1}{(n-k+j)!},
	$$
	where the factor $(n+j)!$ depends only on $n$, and $k!$ depends only on $k$. Therefore
	$$
	H_d(n,k)
	\simeq
	\prod_{j=0}^{d-1}\frac{1}{(n-k+j)!}.
	$$
		Writing $N=n-k$, we have
	$$
	\prod_{j=0}^{d-1}\frac{1}{(N+j)!}
	\simeq
	\frac{1}{N!}\prod_{j=2}^{d}\frac{1}{(j)_N}.
	$$
	Thus the total positivity of $H_d$ is equivalent to that of the Toeplitz
	matrix generated by the sequence
	$$
	\alpha_n=
	\frac{1}{n!}\prod_{j=2}^{d}\frac{1}{(j)_n},
	\qquad n\ge0.
	$$
		A classic result~\cite[p.341]{Lev80} of Laguerre on multiplier sequences states that
		for any $t>0$, the sequence $\left\lbrace {1}/{(t)_n}\right\rbrace _{n=0}^{\infty}$ is a multiplier sequence.
		It follows by Lemma~\ref{Hadamard-ms} that the Hadamard product sequence 
		$$\left\lbrace \prod_{j=2}^{d}{1}/{(j)_{n}} \right\rbrace _{n= 0}^{\infty}$$ is a multiplier sequence.
		Thus, the sequence
		$\left\lbrace \frac{1}{n!}\prod_{j=2}^{d}\frac{1}{(j)_{n}} \right\rbrace _{n= 0}^{\infty}$
		is a PF sequence by Lemma~\ref{MSPF},
		which implies that the corresponding Toeplitz matrix $$\left[ \frac{1}{(n-k)!}\prod_{j=2}^{d}\frac{1}{(j)_{n-k}} \right] _{n,k\geq 0}$$ is TP.
		By Lemma~\ref{wyprop},
		$H_d=[H_d(n,k)]_{n,k\geq 0}$ is totally positive.

		Next we prove the total positivity of $H_d^\ulcorner$. Again by~\eqref{defentry},
		$$
		H_d(n+k,k)
		=
		\prod_{j=0}^{d-1}
		\frac{\binom{n+k+j}{k}}{\binom{k+j}{k}}.
		$$
		Removing row and column factors, we get
		$$
		H_d(n+k,k)
		\simeq
		\prod_{j=0}^{d-1}(n+k+j)!.
		$$
		The sequence $\{m!\}_{m=0}^{\infty}$ is an SM sequence since
		$
		m! = \int_0^\infty x^m e^{-x}\,dx.
		$
		The total positivity of $[(n+k)!]_{n,k\ge 0}$ implies that of $[(n+k+j)!]_{n,k\ge 0}$ for $j\ge 0$,
		since $[(n+k+j)!]_{n,k\ge 0}$ is a submatrix of $[(n+k)!]_{n,k\ge 0}$,
		and the submatrix of a totally positive matrix is still totally positive.
		Thus the sequence $\left\lbrace (k+j)!\right\rbrace _{k= 0}^{\infty}$ is an SM sequence.
		The Hadamard product of two SM sequences is still an SM sequence, 
		see the proof of Theorem~\ref{ray} (iii). 
		Hence,
		the Hankel matrix
		$$\left[\prod_{j=0}^{d-1}(n+k+j)!\right]_{n,k\ge 0}$$
		is totally positive.
		Thus we obtain that
		$H_d^{\ulcorner}=[H_d(n+k,k)]_{n,k\geq 0}$ is totally positive.
	\end{proof}
	
	\section{Further work}
	
	Let  
	$$
	H_{n,d}=H_{n,d}(1)=\sum_{k=0}^{n}H_d(n,k)
	$$
	be the $n$-th row sum of the $d$-Hoggatt triangle.
	It is easy to see that
	$H_{n,1}=2^n$ and $H_{n,2}=\frac{1}{n+2} \binom{2n+2}{n+1}$ are log-convex.
	Consider the Baxter numbers
	$$B_{n}=H_{n,3}=\sum_{k=0}^{n}H_3(n,k)=\sum_{k=0}^{n} \frac{\binom{n}{k} \binom{n+1}{k} \binom{n+2}{k}}
	{\binom{k+1}{k} \binom{k+2}{k}}.$$
	Bouvel et al.~\cite{BGRR18} investigated the combinatorial interpretations of the Baxter numbers $B_n$.
	It is natural to consider the log-convexity of the sequence of row sums of the Hoggatt triangles.
    Using the {\tt Mathematica} package fastZeil~\footnote{The package is accessible at https://risc.jku.at/sw/fastzeil/.} given by Paule and Schorn~\cite{PS95},
	we obtain the recurrence relation for the Baxter numbers $B_n$.
	
	\noindent
	{\tiny\sffamily In[1]:=}\
{\footnotesize\bfseries\boldmath $\ll$ RISC`fastZeil`;}

\noindent
{\tiny\sffamily In[2]:=}\,
{\small\bfseries\boldmath
	$\mathrm{Zb}[\mathrm{Binomial}[n,k]\mathrm{Binomial}[n+1,k]\mathrm{Binomial}[n+2,k]/
	(\mathrm{Binomial}[k+1,k]$
}

\hspace*{2em}
{\small\bfseries\boldmath
	$\mathrm{Binomial}[k+2,k]),\{k,0,n\},n,2]$
}

\vspace{2pt}

\noindent
{\tiny\sffamily Out[2]=}\,
{\small\bfseries
	$\{-8(1+n)(2+n)\mathrm{SUM}[n]+(-82-49n-7n^2)\mathrm{SUM}[1+n]$
	$+(5+n)(6+n)\mathrm{SUM}[2+n]==0\}$
}

	The output implies that $B_n$ satisfies the following recurrence relation:
	\begin{equation}\label{recb}
		\begin{aligned}
			(n+4)(n+5)B_{n+1} = (40+35n+7n^2)B_{n} + 8n(n+1)B_{n-1}
		\end{aligned}
	\end{equation}
	with $B_0=1,~B_1=2$, see \cite[A001181]{Slo24} for instance.
	Liu and Wang presented some criteria for the log-convexity
	of the sequences satisfying the
	three-term recurrence.
	\begin{thm}[{\cite[Theorem 3.1]{LW07}}]\label{liu-wang}
		Assume that $\{z_n\}_{n=0}^{\infty}$ satisfies
		\begin{equation*}
			\begin{aligned}
				a_nz_{n+1} = b_nz_n + c_nz_{n-1},
			\end{aligned}
		\end{equation*}
		where $a_n$, $b_n$ and $c_n$ are positive for $n \geq 1$.
		Let
		\begin{equation*}\label{lambda}
			\begin{aligned}
				\lambda_n = \frac{b_n+\sqrt{b^2_n+4a_nc_n}}
				{2a_n}.
			\end{aligned}
		\end{equation*}
		Suppose that the sequence $z_0$, $z_1$, $z_2$, $z_3$ is log-convex and that the inequality
		$$a_n\lambda_{n-1}\lambda_{n+1}-b_n\lambda_{n-1}-c_n \geq 0$$
		holds for $n \geq 2$.
		Then the sequence $\{z_n\}_{n=0}^{\infty}$ is log-convex.
	\end{thm}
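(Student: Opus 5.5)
The plan is to pass to the ratios $x_n=z_n/z_{n-1}$ for $n\ge1$. Since the coefficients are positive, once $z_0,z_1>0$ the recurrence keeps every $z_n$ positive, so the ratios are well defined. Log-convexity of $\{z_n\}$ is then equivalent to $x_1\le x_2\le x_3\le\cdots$. Dividing the recurrence by $a_nz_n$ gives
$$x_{n+1}=f_n(x_n),\qquad f_n(x):=\frac{b_n}{a_n}+\frac{c_n}{a_n x}.$$
Each $f_n$ is strictly decreasing on $(0,\infty)$, and $f_n(x)-x$ is strictly decreasing as well. The number $\lambda_n$ in the statement is the unique positive root of $a_n\lambda^2-b_n\lambda-c_n=0$, i.e. the unique positive fixed point of $f_n$. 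Hence
$$x\le\lambda_n \iff f_n(x)\ge x,\qquad x\ge\lambda_n \iff f_n(x)\le x.$$

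The core of the proof is to establish, by induction on $n\ge2$, the invariant
$$\lambda_{n-1}\le x_n\le\lambda_n .$$
This invariant already gives monotonicity. From $x_n\le\lambda_n$ and the fact that $f_n$ is decreasing, we get $x_{n+1}=f_n(x_n)\ge f_n(\lambda_n)=\lambda_n\ge x_n$. This same chain also supplies the lower half $x_{n+1}\ge\lambda_n$ of the invariant at the next index.

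For the upper half, I would use the lower bound $x_n\ge\lambda_{n-1}$ together with monotonicity of $f_n$:
$$x_{n+1}=f_n(x_n)\le f_n(\lambda_{n-1})=\frac{b_n}{a_n}+\frac{c_n}{a_n\lambda_{n-1}} .$$
Multiplying through by $a_n\lambda_{n-1}>0$, the bound $f_n(\lambda_{n-1})\le\lambda_{n+1}$ is exactly the hypothesis $a_n\lambda_{n-1}\lambda_{n+1}-b_n\lambda_{n-1}-c_n\ge0$. So $x_{n+1}\le\lambda_{n+1}$ and the invariant propagates.

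The step needing care is the base case $n=2$, which must come from the log-convexity of $z_0,z_1,z_2,z_3$, i.e. from $x_1\le x_2\le x_3$.
\begin{itemize}
\item Since $x_2=f_1(x_1)\ge x_1$, the fixed-point characterization gives $x_1\le\lambda_1$, and therefore $x_2=f_1(x_1)\ge f_1(\lambda_1)=\lambda_1$.
\item Since $x_3=f_2(x_2)\ge x_2$, the same characterization gives $x_2\le\lambda_2$.
\end{itemize}
Thus $\lambda_1\le x_2\le\lambda_2$, which is the invariant at $n=2$. Induction then yields $x_n\le x_{n+1}$ for all $n\ge2$, and combined with $x_1\le x_2$ this proves that $\{z_n\}_{n=0}^{\infty}$ is log-convex.

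I expect the only real obstacle to be identifying the right two-sided invariant, in particular realizing that the lower bound $x_n\ge\lambda_{n-1}$ must be carried along so that the hypothesis can control $x_{n+1}$. Once that is in place, every step reduces to the monotonicity of $f_n$.
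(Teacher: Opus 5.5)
Your proof is correct and complete. The ratio recursion $x_{n+1}=f_n(x_n)$, the fixed-point characterization of $\lambda_n$, and the two-sided invariant $\lambda_{n-1}\le x_n\le\lambda_n$ (with its base case taken from the log-convexity of $z_0,z_1,z_2,z_3$) are all that is needed. Assuming $z_0,z_1>0$ is harmless, since a zero there either contradicts the recurrence or forces the trivial zero sequence.

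The paper gives no proof of its own; it quotes the result from Liu and Wang. Your argument is, as far as I recall, essentially the standard ratio argument behind that criterion.
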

	We obtain the log-convexity of the Baxter numbers.
	\begin{prop}
		The sequence of the Baxter numbers $\{B_n \}_{n=0}^{\infty}$ is log-convex.
	\end{prop}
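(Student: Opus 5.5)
The plan is to apply Theorem~\ref{liu-wang} to the recurrence \eqref{recb}. Write it as $a_nB_{n+1}=b_nB_n+c_nB_{n-1}$ with
$$a_n=(n+4)(n+5),\qquad b_n=7n^2+35n+40,\qquad c_n=8n(n+1).$$
All three coefficients are positive for $n\ge1$.

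\textbf{Initial terms.} From $B_0=1$ and $B_1=2$, the recurrence gives $B_2=6$ and $B_3=22$. The sequence $1,2,6,22$ is log-convex, since $2^2\le 1\cdot 6$ and $6^2\le 2\cdot 22$. So it remains to verify
$$a_n\lambda_{n-1}\lambda_{n+1}-b_n\lambda_{n-1}-c_n\ge0 \quad\text{for } n\ge2.$$

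\textbf{Reformulating the inequality.} By definition, $\lambda_n$ is the positive root of $a_nx^2-b_nx-c_n=0$. Hence $a_n\lambda_n-b_n-c_n/\lambda_n=0$. Dividing the required inequality by $\lambda_{n-1}>0$ and subtracting this identity, it becomes
$$a_n(\lambda_{n+1}-\lambda_n)\ \ge\ c_n\Bigl(\frac{1}{\lambda_{n-1}}-\frac{1}{\lambda_n}\Bigr).$$
This form suggests the strategy. As $n\to\infty$, $\lambda_n$ tends to the positive root of $x^2-7x-8$, which is $8$. An expansion $\lambda_n=8-C/n+O(n^{-2})$ with $C>0$ gives $\lambda_{n+1}-\lambda_n\sim C/n^2$ and $1/\lambda_{n-1}-1/\lambda_n\sim C/(64n^2)$. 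Since $a_n\sim n^2$ and $c_n\sim 8n^2$, the left side is about $C$ and the right side about $C/8$. So the inequality should hold with room to spare for large $n$, provided $\lambda_n$ is increasing.

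\textbf{Making it rigorous.}
1. Find explicit rational functions $\ell_n\le\lambda_n\le u_n$, for instance truncations of the expansion of $\lambda_n$ in powers of $1/n$ with an adjusted last term. Check them by substituting into $a_nx^2-b_nx-c_n$: this quadratic is $\le0$ at $x=\ell_n$ and $\ge0$ at $x=u_n$. Each check is a polynomial inequality in $n$ that can be verified by expanding and inspecting signs of coefficients after shifting $n\mapsto n+N_0$.
2. Since the required inequality is increasing in $\lambda_{n+1}$ and in $\lambda_{n-1}$, it suffices to prove
$$a_n\ell_{n-1}\ell_{n+1}-b_n\ell_{n-1}-c_n\ge0.$$
After clearing denominators this is again a polynomial inequality in $n$, valid for $n\ge N_0$.
3. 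Check the finitely many cases $2\le n<N_0$ by direct numerical evaluation of $\lambda_{n-1},\lambda_n,\lambda_{n+1}$.

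\textbf{Main obstacle.} The hard step is choosing bounds $\ell_n$ tight enough. The margin is only of order $1/n^2$ relative to terms of size $n^2$, so crude bounds such as $\ell_n=8-c/n$ with the wrong constant $c$ will fail. I would first compute the exact expansion of $\lambda_n$ to order $n^{-2}$. I would then take $\ell_n=8-C/n+D/n^2-\varepsilon/n^3$ for a suitable $\varepsilon>0$, and push the resulting polynomial inequality through with computer algebra, as the authors did with fastZeil.
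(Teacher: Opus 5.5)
Your proposal is correct and is essentially the paper's proof: apply Theorem~\ref{liu-wang} to \eqref{recb}, check that $1,2,6,22$ is log-convex, and verify $a_n\lambda_{n-1}\lambda_{n+1}-b_n\lambda_{n-1}-c_n\ge0$ for $n\ge2$. The paper hands that last step to Mathematica's \texttt{Resolve}, whereas you sketch a checkable certificate via rational lower bounds $\ell_n\le\lambda_n$. That certificate works, since the expression tends to $224$ and $\lambda_n=8-32/n+\tfrac{1504}{9}n^{-2}-864n^{-3}+O(n^{-4})$; this forces $\varepsilon>864$, and even $\ell_n=8-32/n$ fails, because it loses about $1504$ against that margin of $224$. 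You should also justify the monotonicity in $\lambda_{n-1}$: it needs $a_n\lambda_{n+1}\ge b_n$, which holds because $\lambda_{n+1}>b_{n+1}/a_{n+1}\ge b_n/a_n$.
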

	\begin{proof}
		According to Theorem \ref{liu-wang} and (\ref{recb}), we have
		\begin{equation*}
			\begin{aligned}
				\lambda_n = \frac{40+35n+7n^2+\sqrt{(40+35n+7n^2)^2+32n(n+1)(n+4)(n+5)}}
				{2(n+4)(n+5)}.\\
			\end{aligned}
		\end{equation*}
		For $n\geq 2$, with {\tt Mathematica}, we can prove that
		$$(n+4)(n+5)	\lambda_{n-1}	\lambda_{n+1}-(40+35n+7n^2)	\lambda_{n-1}-8n(n+1)\geq 0.$$
		
		\noindent
		{\tiny\sffamily In[3]:=}\,
		{\normalsize\bfseries\boldmath
			$\lambda[n\_]=$}	{\large\bfseries\boldmath$\frac{40+35n+7n^2+\sqrt{(40+35n+7n^2)^2+32n(n+1)(n+4)(n+5)}}{2(n+4)(n+5)};$
		}
		\vspace{2pt}
		
		\noindent
	{\tiny\sffamily In[4]:=}\,
	{\small\bfseries\boldmath
		$\mathrm{ForAll}[\{n\}, n \geq 2, (n+4)(n+5)\lambda[n-1]	\lambda[n+1]-
		(40+35n+7n^2)\lambda[n-1]-8n(n+1)\geq  \hspace*{2.5em} 0]// \mathrm{Resolve}$
	}
	\vspace{2pt}
	
			\noindent
		{\tiny\sffamily Out[4]=}
			{\small	True}
		
	Since $B_0=1, B_1=2, B_2=6, B_3=22$,
		it is clear that the sequence $B_0, B_1, B_2, B_3$ is log-convex.
		Hence the sequence $\{B_n\}_{n=0}^{\infty}$ is log-convex by Theorem~\ref{liu-wang}.
	\end{proof}
	
	Similarly, we derive that the row sum of the $4$-Hoggatt matrix
	$H_{n,4}=\sum_{k=0}^{n}H_4(n,k)$ satisfies the three-term recurrence
	\begin{equation*}
		\begin{aligned}
			(n+4)(n+5)(n+6)(n+7)H_{n+1,4} =& 6(n+2)(n+4)(n+5)(7+2n)H_{n,4}\\
			& +4n(n+1)(11+4n)(13+4n)H_{n-1,4}
		\end{aligned}
	\end{equation*}
	with $H_{0,4}=1, H_{1,4}=2$, see \cite[A005362]{Slo24} for instance.
	The log-convexity of $\{H_{n,4} \}_{n=0}^{\infty}$ can be obtained similarly.
	
	Unfortunately for $d\ge 5$,
	we have not found the three-term recurrence of the row sum.
	Numerical results suggest that the sequence $\{H_{n,d} \}_{n=0}^{\infty}$ is log-convex.
	Kot\v{e}\v{s}ovec~\cite{Kot21} considered the asymptotic formula for the numbers $H_{n,d}~(n\geq1)$,
	\begin{equation*}
		\begin{aligned}
			H_{n,d} \sim 	2^{-\frac{1}{2}+d(-\frac{1}{2}+d+n)}n^{-\frac{d^2}{2}}\sqrt{\frac{n}{d}}{\pi}^{\frac{1-d}{2}} G(1+d),
		\end{aligned}
	\end{equation*}		
	where $G(n)$ is the Barnes G-function.
	This shows that the sequence of $\{H_{n,d} \}_{n=0}^{\infty}$ is asymptotically log-convex for $d\geq 5$.
	Note that 
	\begin{equation}\label{explicit}
		2^n=\int_{0}^{\infty}x^n\delta(x-2)dx \quad\textrm{and}\quad \frac{1}{n+1}\binom{2n}{n}=\int_{0}^{\infty}x^n
		\left(\frac{1}{2\pi}\sqrt{\frac{4-x}{x}} \right)I_{(0,4]}(x)dx
	\end{equation}
	where $\delta(x)$ is the Dirac delta function and $I_A$ is the indicator function of the set $A$, see~\cite[Eq.(1.2)]{LW08} and~\cite[Eq.(3)]{Lin19} respectively. 
	Hence $\{H_{n,1}\}_{n=0}^{\infty}$ and $\{H_{n,2}\}_{n=0}^{\infty}$  are both SM sequences. 
	(Actually the second equation in~\eqref{explicit} yields that $\{H_{n-1,2}\}_{n=0}^{\infty}$ is an SM sequence 
	and the shifted sequence of an SM sequence is still an SM sequence.)
	It is natural to consider the SM property of $\{H_{n,d}\}_{n=0}^{\infty}$ for $d\ge 3$.
	
	The polynomial sequence $\{f_n(x)\}_{n\ge 0}$ is called {\it $x$-log-convex}
	if $f_{n-1}(x)f_{n+1}(x)-f_n(x)^2$ has nonnegative coefficients for all $n\geq 1$.
	Clearly the $x$-log-convexity of $H_{n,d}(x)$ implies the log-convexity of $H_{n,d}$.
	The $x$-log-convexity of the Narayana polynomial $H_{n,2}(x)$ was conjectured by Liu and Wang~\cite{LW07},
	and solved by Chen, Yang and Wang~\cite{CYW10} and Zhu~\cite{Zhu13}.
	Zhu~\cite{Zhu24} studied $x$-log-convexity of the row-generating polynomials of certain matrices.
	An interesting problem is to show the $x$-log-convexity of $H_{n,d}(x)$ for $d\ge 3$.

	Yu~\cite{Yu09} strengthened Theorem~\ref{ray} (i) on Pascal's triangle.
	
	\begin{thm}[{\cite[Section 2]{Yu09}}]\label{Conj-Yu}
		Let $n_0,k_0,p,q$ be integers such that $n_0\ge k_0\ge 0,q>p>0,$ and $k_0<q$.
		Define $C_i=\binom{n_0+ip}{k_0+iq}$, $i=0,1,2,\dots.$
		Then the polynomial $\sum_{i\ge 0} C_ix^i$ has only real zeros.
	\end{thm}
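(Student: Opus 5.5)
The plan is to realize $\sum_{i\ge0}C_ix^i$ as the image of a simpler real-rooted polynomial under operators that preserve real-rootedness. Write $n_0+ip=n$ and $k_0+iq=k$. Since $q>p$, the lower index grows faster than the upper one, and $C_i=0$ once $k_0+iq>n_0+ip$. So the sum is a finite polynomial of degree $N=\lfloor (n_0-k_0)/(q-p)\rfloor$.

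The first step is to reparametrize by the complementary index. We have $C_i=\binom{n_0+ip}{(n_0-k_0)-i(q-p)}$, so put $m=n_0-k_0-i(q-p)$ and $r=q-p>0$. Then $C_i=\binom{n_0+ip}{m}$, where the upper index is an affine function of $i$ and the lower index decreases linearly in $i$. The natural model to aim for is the following. Fix a polynomial $f(x)=\sum_j \binom{M}{j}x^j=(1+x)^M$ whose coefficients are binomials. Its coefficient sequence is PF, and the Hadamard product of a PF sequence with a multiplier sequence keeps real roots. Taking every $r$-th coefficient of a real-rooted polynomial with nonnegative coefficients, along an arithmetic progression, is known not to preserve real-rootedness in general. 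The standard route in Yu's setting is therefore different: express $C_i$ as an integral or finite-difference transform.

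Concretely, I would use the representation $\binom{a}{b}=[t^b](1+t)^a$, which gives
\[
\sum_i C_i x^i=\sum_i [t^{k_0+iq}](1+t)^{n_0+ip}x^i .
\]
I would then show this equals the $\{1/\text{factorial}\}$-type transform of a polynomial of the form $\prod_{\ell}(\cdot)$. To do this, write
\[
C_i=\frac{(n_0+ip)!}{(k_0+iq)!\,(n_0-k_0-ir)!}.
\]
The factor $1/(n_0-k_0-ir)!$ corresponds to reversing a sequence supported on the finite range $0\le i\le N$. The ratio $(n_0+ip)!/(k_0+iq)!$ equals $1/[(n_0+ip+1)(n_0+ip+2)\cdots(k_0+iq)]$, a reciprocal of a product of $i(q-p)+(k_0-n_0)$ linear factors in $i$. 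Decompose it into $r$-fold Pochhammer blocks,
\[
\prod_{s=1}^{r}\frac{1}{\bigl(\tfrac{\beta_s}{r}\bigr)_{i}}\ \text{times}\ \frac{1}{(\text{const})_{\text{shifted}}},
\]
using the Gauss multiplication formula. The condition $k_0<q$ is what makes the shifts $\beta_s$ positive, so that no factor vanishes or changes sign.

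After this, $C_i\simeq \frac{1}{(N-i)!}\prod_s \frac{1}{(\gamma_s)_i}$ up to factors depending only on $i$ through multiplier sequences of Laguerre type. The key step is then as follows. The polynomial $\sum_{i=0}^N \frac{1}{(N-i)!\,i!}x^i=(1+x)^N/N!$ is real-rooted, and $\{i!/(\gamma)_i\}$, or equivalently $\{1/(\gamma)_i\}$ applied after converting, is a multiplier sequence by Laguerre's result quoted above. Lemma~\ref{Hadamard-ms} lets me combine the factors. So $\sum_i C_ix^i$ is obtained from $(1+x)^N$ by applying a multiplier sequence, and it has only real zeros.

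The main obstacle is the bookkeeping in the Gauss-multiplication step. The product $(n_0+ip+1)\cdots(n_0+iq+\dots)$ mixes a numerator with step $p$ and a denominator with step $q$, and I must check that everything reduces to ratios $\Gamma(\cdot+ir)$ split into genuine Pochhammer symbols $(\gamma)_i$ with $\gamma>0$, plus leftover factors of the form $\Gamma(a+ip)/\Gamma(b+ip)$. The leftover factors with step $p$ must themselves be shown to be multiplier sequences. I would handle this by writing $\Gamma(a+ip)/\Gamma(b+ip)$ with $b>a$ as a product of reciprocals of linear terms $1/(c+ip)$, each a multiplier sequence by Laguerre's theorem after scaling by $p$. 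Where $b<a$, I would instead pair these factors with the $1/(k_0+iq)!$ factors. If the signs do not cooperate, I would fall back on Yu's original interlacing/induction argument on $n_0$.
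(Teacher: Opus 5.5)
The paper gives no proof of this statement (it is quoted from Yu), so your argument has to stand on its own, and its key step fails. On the support $0\le i\le N$ you have $k_0+iq\le n_0+ip$, so $(n_0+ip)!/(k_0+iq)!$ is a \emph{product} of $n_0-k_0-ir$ linear factors, not a reciprocal; your count $i(q-p)+(k_0-n_0)$ is negative there. After Gauss multiplication, $(n_0+ip)!$ therefore leaves the rising factorials $\prod_{s=0}^{p-1}\bigl(\tfrac{n_0+1+s}{p}\bigr)_i$ in the numerator. For $r>1$, the factor $1/(n_0-k_0-ir)!$ is not $1/(N-i)!$; it also contributes factors $1/\Gamma(N-i+\theta)$ with non-integer $\theta$. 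So $C_i/\binom{N}{i}$ is a ratio of Pochhammer-type factors, not $c^i\prod_s 1/(\gamma_s)_i$.

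Take $p=1$, $q=2$, $k_0=0$. Then $C_i=\binom{N+i}{2i}=\binom{N}{i}\,4^{-i}(N+1)_i/(\tfrac12)_i$. The ratios $(N+1+i)/(i+\tfrac12)$ of the sequence $(N+1)_i/(\tfrac12)_i$ are not of the form $c/\prod_s(\gamma_s+i)$, so Laguerre's theorem does not apply. In fact it is not a multiplier sequence at all: its exponential generating function ${}_1F_1(N+1;\tfrac12;x)$ has non-real zeros. All you can say is that it acts as an $N$-multiplier sequence on $(1+x)^N$, which is the theorem restated.

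Your proposed repairs also rest on false claims. Neither $\{1/(c+ip)\}_{i\ge0}$ nor $\{i!/(\gamma)_i\}_{i\ge0}$ is a multiplier sequence in general. For example, $\{1/(i+1)\}=\{i!/(2)_i\}$ sends $(1+x)^2$ to $1+x+x^2/3$, which has non-real zeros. Laguerre's theorem concerns $1/(\gamma)_i$, a product of $i$ factors. Two smaller points: $k_0<q$ is not what makes the parameters positive, since they are $(k_0+1+t)/q>0$ regardless; and ``fall back on Yu's argument'' is not an argument.

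A short proof that does work uses total positivity instead of multiplier sequences. Put $r=q-p$ and $M=n_0-k_0$. The number of north/east lattice paths from $S_j=(-jr,\,jq)$ to $T_i=(M-ir,\,k_0+iq)$ is $\binom{n_0+(i-j)p}{k_0+(i-j)q}$. For $i<j$ this count is zero because the required rise $k_0+(i-j)q$ is negative, and this is exactly where $k_0<q$ is used. The sources lie on the line $qx+ry=0$ and the sinks on $qx+ry=qM+rk_0$, both listed in decreasing order of $rx-qy$. Since $qx+ry$ increases strictly along every path, paths $S_j\to T_{i'}$ and $S_{j'}\to T_i$ with $j<j'$ and $i<i'$ must share a vertex. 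By the Lindstr\"om--Gessel--Viennot lemma, every minor of the Toeplitz matrix $[C_{i-j}]_{i,j\ge0}$ counts non-intersecting path systems and is therefore nonnegative. Hence $\{C_i\}$ is a PF sequence, and by Aissen--Schoenberg--Whitney, $\sum_i C_ix^i$ has only real zeros.
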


	It might be of interest to ask whether these results can be generalized to the $d$-Hoggatt triangle for $d\ge 2$.
	
	\section*{Acknowledgement}
	The authors greatly appreciate the anonymous referees for their careful reading and valuable suggestions.
	The authors would also like to thank Yaling Wang 
	for helpful comments.
	This work was partially supported by the National Natural Science Foundation of China (No. 12201100).


\end{document}